\documentclass[11pt]{article}

\usepackage{subfigure}
\usepackage{tikz}
\usepackage{graphicx}
\usetikzlibrary{arrows}
\usepackage{amsmath} % needed for command eqref
\usepackage{amssymb} % needed for math fonts
\usepackage{fullpage}
\usepackage{graphicx}
\usepackage{amsthm} 
\usepackage{dsfont}
\usepackage{array}
\usepackage{multicol}
\usepackage[utf8]{inputenc}  
\usepackage[frenchb,english]{babel}
\usepackage{MnSymbol}
\usepackage{enumerate} % needed for some options in enumerate
\usepackage[blocks]{authblk}
\usepackage{mathtools}

\theoremstyle{plain}
\newtheorem{defi}{Definition}

\newtheorem{lem}[defi]{Lemma}
\newtheorem{coro}[defi]{Corollary}
\newtheorem{prop}[defi]{Proposition}
\newtheorem{theo}[defi]{Theorem}
\theoremstyle{definition}

\theoremstyle{remark}
\newtheorem{remark}[defi]{Remark}

\numberwithin{defi}{section}

\newcommand{\Pcal}{\mathcal{P}}

\newcommand{\Ccal}{\mathcal{C}}

\newcommand{\Prob}{\mathbb{P}}

\title{de Finetti reductions for partially exchangeable probability distributions}
\author{Ivan Bardet}
    \affil[1]{\small Institut des Hautes \'{E}tudes Scientifiques, Universit\'{e} Paris-Saclay, 91440 Bures-sur-Yvette, France}
    \author{C\'ecilia Lancien}  
    \affil[2]{\small Departamento de An\'{a}lisis Matem\'{a}tico, Universidad Complutense de Madrid, 28040 Madrid, Spain \& Instituto de Ciencias Matem\'{a}ticas, 28049 Madrid, Spain}
    \author{Ion Nechita}
	\affil[3]{\small Laboratoire de Physique Th\'{e}orique, Universit\'{e} de Toulouse, F-31062 Toulouse, France}

\begin{document}

\maketitle

\begin{abstract}
We introduce a general framework for de Finetti reduction results, applicable to various notions of partially exchangeable probability distributions. Explicit statements are derived for the cases of exchangeability, Markov exchangeability, and some generalizations of these. Our techniques are combinatorial and rely on the ``BEST'' theorem, enumerating the Eulerian cycles of a multigraph. 
\end{abstract}

\tableofcontents

\section{Introduction}\label{sect:intro}

The general motivation behind all de Finetti type results could be phrased as follows: how to make use of the invariances that a probability distribution in many variables might have to reduce its study to that of simpler probability distributions? What we mean by ``invariances'' of a sequence of random variables is that exchanging some of the variables in it does not change its distribution. The first example that one could think of is clearly when all variables play the same role. Assuming for concreteness that we are dealing with sequences of binary random variables, we see that, in this fully exchangeable case, the probability of a given $n$-variable sequence is entirely determined by its number of $0$'s and $1$'s, as if the $n$ variables had been drawn independently according to the same distribution. The goal of de Finetti type statements would thus be to make this observed similarity between the two random models mathematically precise. In this paper, we are also exploring less restrictive notions of exchangeability, and for each of them trying to relate in a quantitative way any multi-variable sequence exhibiting this symmetry to simpler sequences.

\subsection{General setting}
Let us now introduce more concretely the mathematical setting in which we will be working. Let $V$ be a finite set of cardinality $d$. We are interested in probability distributions on $V^n$, for some positive integer $n$, which satisfy specific kinds of symmetries. The notion of \emph{partial exchangeability} was introduced by de Finetti \cite{dF} in order to address distributional symmetries. Denote by $\Pcal(V^n)$ the convex set of probability distributions on $V^n = V \times \cdots \times V$ ($n$ times). Given an equivalence relation $\sim$ on $V^n$, a probability distribution $P\in\Pcal(V^n)$ is called \emph{partially exchangeable} for the relation $\sim$ (or $\sim$--exchangeable) if, for any $v,w\in V^n$, $P(v)=P(w)$ whenever $v\sim w$. We denote by $\Pcal_\sim(V^n)$ the corresponding convex subset of $\Pcal(V^n)$. 

Maybe the most well-known examples of partial exchangeability are the two following ones:
\begin{enumerate}
\item \emph{Exchangeability}: $P\in\Pcal(V^n)$ is exchangeable if it assigns the same probability to any two sequences in $V^n$ that have the same number of $i$ for each $i\in V$.
\item \emph{Markov exchangeability}: $P\in\Pcal(V^n)$ is Markov exchangeable if it assigns the same probability to any two sequences in $V^n$ that have the same number of transitions from $i$ to $j$ for each $(i,j)\in V$.
\end{enumerate}

In this work, we also introduce a generalization of the two notions above, which we call \emph{$\ell$-Markov exchangeability}: $P\in\Pcal(V^n)$ is $\ell$-Markov exchangeable if it assigns the same probability to any two sequences in $V^n$ that have the same number of $\ell$-transitions $i_1\to i_2 \to \cdots\to i_{\ell}$ for each $(i_1,...,i_{\ell})\in V$.

We shall additionally discuss notions of double partial exchangeability, such as \emph{double exchangeability}: if $V=V_1\times V_2$ for two finite sets $V_1$ and $V_2$, $P\in\Pcal(V^n)$ is doubly exchangeable if it assigns the same probability to any two sequences in $V^n$ that have the same numbers of $i$ and $j$ for each $i\in V_1$ and $j\in V_2$, or otherwise said if the marginals of $P$ over $V_1^n$ and $V_2^n$ are both exchangeable. Of course, the notion above generalizes to \emph{double Markov exchangeability}, or even \emph{double $\ell$-Markov exchangeability}.

These are all the examples that we cover in this article, but one may think of many others. For instance, one can consider the case where $V$ is the Cartesian product of more than two parties. One can also partition $V$ and impose either exchangeability or Markov exchangeability on each part; we shall prove general results which allow one to mix and match the different notions of partial exchangeability to fit a specific symmetry model. 

\begin{figure}[ht]
\begin{center}
\includegraphics[width=.45\textwidth]{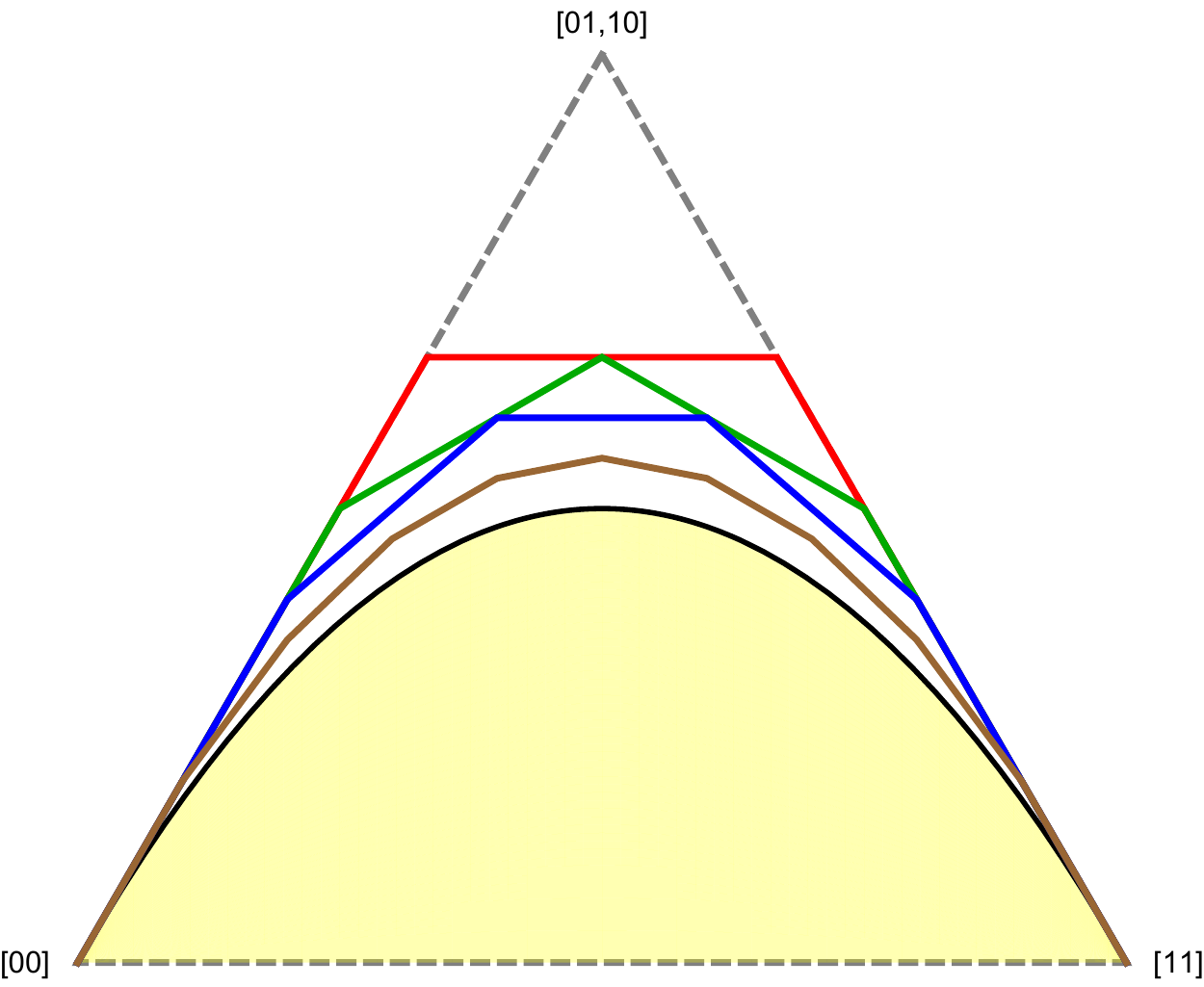}\qquad
\includegraphics[width=.45\textwidth]{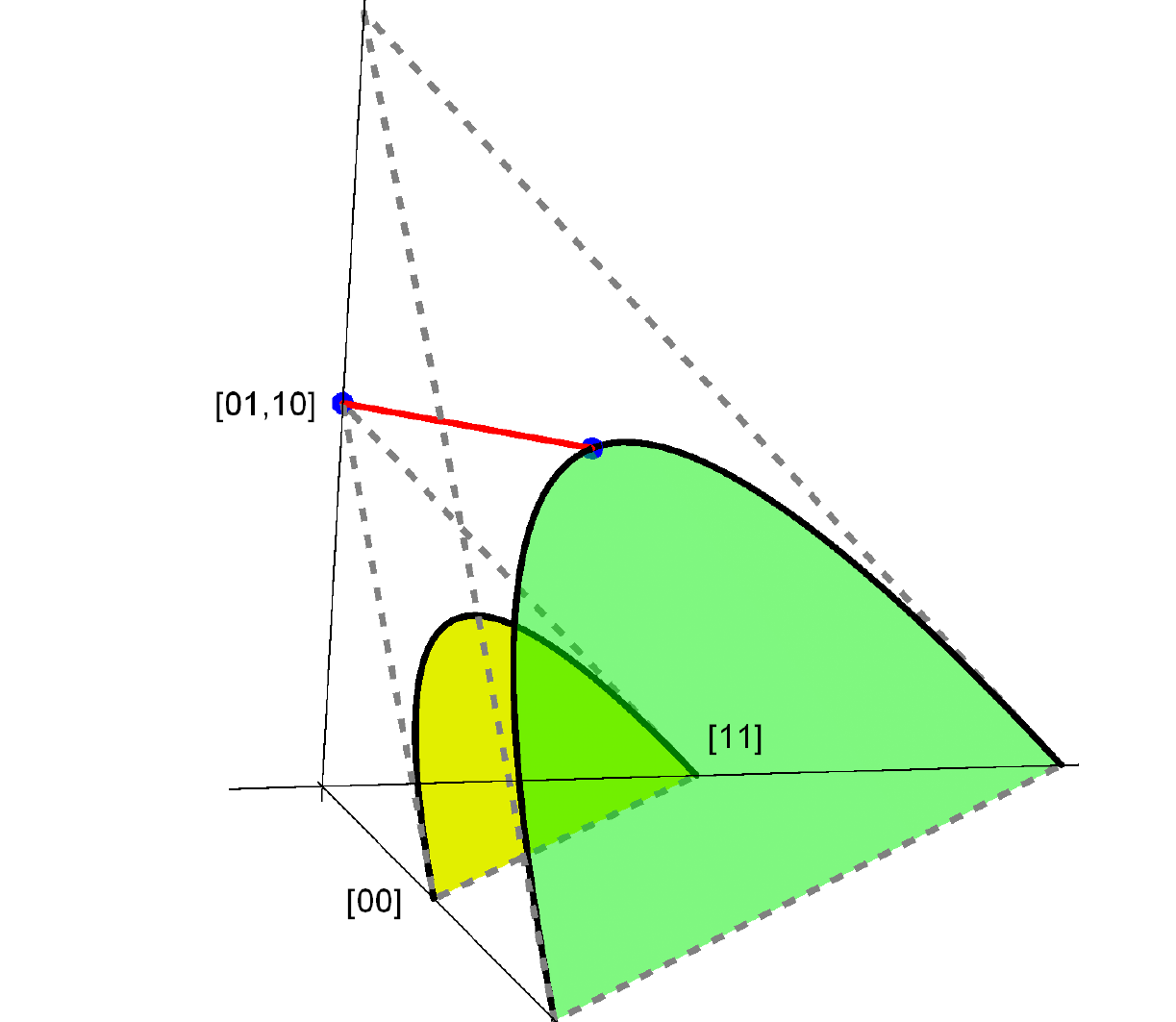}
\end{center}
\caption{Exchangeability for $V=\{0,1\}$ and $m=2$. In the left panel, the three extreme distributions are the nodes of the triangle, the i.i.d.~distributions are represented by the black curve, while the mixture of i.i.d.~distributions are in the filled yellow area. The coloured polygonal lines represent $m=2$-marginals of exchangeable distributions on $V^n$, with $n=\textcolor{red}{3},\textcolor{green}{4},\textcolor{blue}{5},\textcolor{brown}{10}$ respectively. In the right panel, the setting of de Finetti reductions: the $([00],[11],[01,10])$-simplex is (coordinate-wise) smaller than the set (green filled area) of mixtures of i.i.d.~distributions, scaled by a factor of 2; indeed, the vertex corresponding to $[01,10]$ has the same $z$-coordinate as the top-point of the green-filled area in the foreground.}
\end{figure}

\paragraph{}For a general equivalence relation on $V^n$, the finite de Finetti Theorem \cite{dF} characterizes the extreme points of $\Pcal_\sim(V^n)$.

\begin{theo}\label{theo_dFextremepoints}
Given an equivalence relation $\sim$ on $V^n$, $\Pcal_\sim(V^n)$ is a simplex whose extreme points are the uniform distributions on the equivalence classes for $\sim$.
\end{theo}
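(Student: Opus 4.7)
The plan is to exploit the fact that an equivalence relation partitions $V^n$ into disjoint classes, and a $\sim$--exchangeable distribution is constant on each class, so its structure is entirely controlled by the total mass it assigns to each class.

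\paragraph{Step 1: parametrization.} Let $C_1,\ldots,C_k$ be the equivalence classes of $\sim$, and for each $i$ let $U_i\in\Pcal(V^n)$ be the uniform distribution on $C_i$, i.e.\ $U_i(v)=|C_i|^{-1}\Ind_{C_i}(v)$. Each $U_i$ is trivially $\sim$--exchangeable and lies in $\Pcal_\sim(V^n)$. Conversely, if $P\in\Pcal_\sim(V^n)$, then for every class $C_i$ the value $P(v)$ is the same for all $v\in C_i$; call it $p_i$. Setting $\lambda_i\deq p_i|C_i|$, one immediately reads off
\[
P=\sum_{i=1}^{k}\lambda_i\, U_i,\qquad \lambda_i\geq 0,\qquad \sum_{i=1}^{k}\lambda_i=\sum_{v\in V^n} P(v)=1.
\]

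\paragraph{Step 2: uniqueness of the decomposition, hence the simplex structure.} The $U_i$'s have pairwise disjoint supports, so they are linearly independent in the free vector space $\R^{V^n}$. Consequently, if $P=\sum_i\lambda_i U_i=\sum_i\mu_i U_i$ with all coefficients summing to $1$, then comparing the mass on any $v\in C_i$ yields $\lambda_i=\mu_i$. Thus every element of $\Pcal_\sim(V^n)$ admits a \emph{unique} convex decomposition over $\{U_1,\ldots,U_k\}$, which is exactly the defining property of a simplex with vertex set $\{U_1,\ldots,U_k\}$.

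\paragraph{Step 3: extremality of the $U_i$'s.} It remains to check that each $U_i$ is an extreme point of $\Pcal_\sim(V^n)$. If $U_i=\alpha Q+(1-\alpha) R$ with $Q,R\in\Pcal_\sim(V^n)$ and $\alpha\in(0,1)$, then both $Q$ and $R$ must be supported on $C_i$ (since $U_i$ vanishes off $C_i$ and the coefficients are strictly positive); being $\sim$--exchangeable and supported on a single class, $Q$ and $R$ are necessarily uniform on $C_i$, so $Q=R=U_i$.

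\paragraph{Difficulty assessment.} There is no genuine obstacle here: the statement is essentially a structural observation about invariant measures on a finite set under an equivalence relation. The only subtle point to remember is that ``simplex'' here is the convex-geometric notion (existence \emph{and uniqueness} of the convex decomposition over the extreme points), which is why Step~2 matters and is not redundant with the Krein--Milman-style conclusion of Step~3.
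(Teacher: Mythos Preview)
Your proof is correct. The paper does not actually give a self-contained proof of this theorem---it is attributed to de Finetti---but the same ingredients surface inside the proof of Lemma~\ref{lem_constrainedDeFinettiTheorem}: there, existence of the decomposition is obtained by invoking Krein--Milman (taking for granted that the $Q_k$ are extreme), and uniqueness follows from the $Q_k$ having disjoint supports, exactly as in your Step~2. Your argument is marginally more elementary and more complete: you bypass Krein--Milman by writing down the convex combination explicitly from the constancy of $P$ on each class, and you verify extremality of the $U_i$ directly in Step~3 rather than assuming it. The Remark following Lemma~\ref{lem_constrainedDeFinettiTheorem} also sketches a different route, observing that the $\sqrt{Q_k}$ form an orthonormal basis of the span of $\Pcal_\sim(V^n)$ in $\ell^2(V^n)$, which recovers the simplex structure via the inner-product (fidelity) viewpoint.
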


For exchangeability and Markov exchangeability, the uniform distributions on the equivalence classes for $\sim$ can be quantitatively compared to much simpler distributions (via urn models for instance), namely: independently and identically distributed (i.i.d.) probability distributions in the exchangeable case and Markovian probability distributions in the Markov exchangeable case. Combining these comparison results with the above characterization of the extreme points of $\Pcal_\sim(V^n)$ as the uniform distributions on the equivalence classes for $\sim$, then allows to derive, in these two cases, the so-called \emph{de Finetti theorems}. Their infinite versions read as follows: If $P$ is an infinitely exchangeable (resp.~Markov exchangeable) probability distribution, then for any positive integer $m$, its marginal on $V^m$ is a convex combination of i.i.d. (resp.~Markovian) probability distributions. The exchangeable case was proven by de Finetti in his seminal paper \cite{dF} while the Markov exchangeable case was proven by Diaconis and Freedman in \cite{DiaFreed80}. In practice, the finite (approximate) versions of these statements are often more useful. They were established later, by Diaconis and Freedman for exchangeability \cite{diaconis1980finite} and by Zaman for Markov exchangeability \cite{Zaman1986finite}. Informally, they tell us the following: If $P$ is an exchangeable (resp.~Markov exchangeable) probability distribution on $V^n$, then for any positive integer $m\ll n$, its marginal on $V^m$ is well-approximated (say in $\ell^1$-distance) by a convex combination of i.i.d. ( resp.~Markovian) probability distributions. 

More recently, motivated by questions in theoretical computer science and information theory, people got interested in another sort of de Finetti type statements, often referred to as \emph{de Finetti reductions}. The goal here is slightly different: given a partially exchangeable probability distribution $P\in\Pcal_\sim(V^n)$, one is not interested in \emph{approximating} the marginals of $P$ by simpler probability distributions, but just in \emph{upper bounding} $P$ itself by simpler probability distributions. A result of this kind has been established in \cite{lancien2016flexible} in the case of exchangeability: If $P$ is an exchangeable probability distribution on $V^n$, then it is upper bounded (point-wise) by a convex combination of i.i.d.~probability distributions times a pre-factor which is \emph{polynomial} in $n$. The fact that this pre-factor can be shown to be polynomial in $n$, and not exponential, is a crucial point in the applications of this result. Let us try to briefly explain why, by describing in a very vague way the general scenario that we have mind, which is the one of multiplicativity problems. Imagine that you are given a convex order-preserving functional $\omega$ which is known to be upper bounded by $1-\delta$ on a given subset $\mathcal{P}_1$ of $\Pcal(V)$, and you would like to upper bound $\omega^{\otimes n}$ on a related subset $\mathcal{P}_n$ of $\Pcal(V^n)$. If $\mathcal{P}_n\subset\mathcal{P}_1^{\otimes n}$, then clearly $\omega^{\otimes n}$ is upper bounded by $(1-\delta)^n$ on $\mathcal{P}_n$, which goes to $0$ exponentially with $n$. If $\mathcal{P}_n$ is more general than this, but still such that any of its elements can be upper bounded by an element of $\mathcal{P}_1^{\otimes n}$ times a $\mathrm{poly}(n)$ pre-factor, then again $\omega^{\otimes n}$ is upper bounded by a quantity which goes to $0$ exponentially with $n$. 

In this paper, we describe a unified approach to derive de Finetti reductions for various notions of partial exchangeability. More precisely, this means that, for various equivalence relations $\sim$ on $V^n$, we are able to upper bound any element of $\Pcal_\sim(V^n)$ by a convex combination of much simpler probability distributions times a pre-factor which is polynomial in $n$.

\subsection{Summary of our results}

Before going into greater details, let us specify here once and for all some notation that we will be using repeatedly throughout the whole paper. Let $\sim$ be an equivalence relation on $V^n$. We let $N$ be the number of equivalence classes for this relation and we write $\Ccal_1,...,\Ccal_N$ for the $\sim$ equivalence classes; we also denote by $Q_1,...,Q_N$ the uniform distributions on each of these classes, that is, the extreme points of the simplex $\Pcal_\sim(V^n)$. Given $P,Q$ two probability distributions on the same (finite) set $Z$ and given $C\ge 1$, we write $P\le C\,Q$ if the point-wise inequality $P(z)\le C\,Q(z)$ for each $z\in Z$ holds. Finally, we denote by $F(P,Q)$ the fidelity between $P$ and $Q$, i.e.~$F(P,Q)=\sum_{z\in Z}\sqrt{P(z)}\sqrt{Q(z)}$.

\paragraph{}For each of the previously mentioned notions of partial exchangeability, defined by an equivalence relation $\sim$ on $V^n$, we shall prove two types of de Finetti reductions. 

The first result is a flexible form of de Finetti reduction, which was already obtained in \cite{lancien2016flexible} in the case of exchangeability. The second result is a universal form of de Finetti reduction for conditional probability distributions, which was already proved in \cite{A-FR15} in the case of exchangeability. However, even in the latter case, our proof is much simpler and thus worthwhile. Let us introduce formally their general framework. We recall that, given an equivalence relation $\sim$ on $V^n$, we write $P\in\mathcal P_\sim(V^n)$ if $P(v)=P(w)$ whenever $v\sim w$. In the case where $V=A\times X$ is a product space, $P\in\mathcal P_\sim(A^n\times X^n)$ thus simply means that $P(a,x)=P(b,y)$ whenever $(a,x)\sim (b,y)$.

\begin{defi}\label{def:deF-reduction}
	Let $V$ be a finite alphabet. Let $\sim$ be a sequence of equivalence relations on $\mathcal P(V^n)$ and consider a sequence of distinguished subsets of $\sim$--exchangeable probability distributions $\Pi(V^n) \subseteq \mathcal P_\sim(V^n)$. We say that the pair $(\sim, \Pi)$ admits a \emph{flexible de Finetti reduction} if, for any probability distribution $P \in \mathcal P_\sim(V^n)$, we have
\begin{equation}\label{eq:def-deF-reduction}
P \leq \mathrm{poly}(n) \int_{\pi \in \Pi(V^n)} F(P, \pi)^2 \pi \, d\nu(\pi),
\end{equation}
where $\mathrm{poly}(n)$ is a polynomial in $n$ and $\nu$ is a probability distribution on $\Pi(V^n)$.
\end{defi}

\begin{defi}\label{def:deF-reduction-cond}
	Let $A,X$ be finite alphabets. Let $\sim$ be a sequence of equivalence relations on $\mathcal P(A^n\times X^n)$ and consider a sequence of distinguished subsets of $\sim$--exchangeable conditional probability distributions $\Pi(A^n\times X^n) \subseteq \mathcal P_\sim(A^n\times X^n)$. We say that the pair $(\sim, \Pi)$ admits a \emph{universal conditional de Finetti reduction} if, for any probability distribution $P\in\Pcal_\sim(A^n\times X^n)$, we have
	\begin{equation}\label{eq:theo_conditionaldFreduction}
	P_{A^n|X^n}\le \mathrm{poly}(n)\, \int_{\pi\in \Pi(A^n\times X^n)}\pi_{A^n|X^n}\,d\nu(\pi)
	\end{equation}
	where $\mathrm{poly}(n)$ is a polynomial in $n$ and $\nu$ is a probability distribution on $\Pi(A^n\times X^n)$.
\end{defi}

Note a major difference between the two definitions above: the upper bounding probability distribution in \eqref{eq:def-deF-reduction} depends on the considered $P \in \mathcal P_\sim(V^n)$ (it is \emph{flexible}) while the upper bounding conditional probability distribution in \eqref{eq:theo_conditionaldFreduction} is the same for all $P\in\Pcal_\sim(A^n\times X^n)$ (it is \emph{universal}). 

\paragraph{}We are now in position to write down informally our two main results. The precise statements appear as Corollaries \ref{cor:exchangeable}, \ref{cor:Markov-exchangeable}, \ref{cor:lMarkov-exchangeable} (for the probability distribution version) and as Corollary \ref{cor:exchangeable-cond} (for the conditional probability distribution version). 

\begin{theo}\label{theo_dFreduction}
The following notions of partial exchangeability admit a flexible de Finetti reduction:
\begin{enumerate}
	\item Exchangeable distributions, together with $\Pi = \{\text{i.i.d.~distributions}\}$
	\item Markov exchangeable distributions, together with $\Pi = \{\text{Markov distributions}\}$.
	\item $\ell$-Markov exchangeable distributions, a natural generalization of the notion of Markov exchangeability.
\end{enumerate}
Explicit bounds on the degree of the polynomial in \eqref{eq:def-deF-reduction} can be given in every situation, as functions of the size of the alphabet $V$ (and of $\ell$).
\end{theo}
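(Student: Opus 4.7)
The plan is to combine the simplicial structure of $\Pcal_\sim(V^n)$ with a combinatorial comparison between uniform distributions on $\sim$-classes and ``empirically natural'' simple distributions drawn from $\Pi$. For a general equivalence relation $\sim$, Theorem \ref{theo_dFextremepoints} lets us decompose any $P \in \Pcal_\sim(V^n)$ as $P = \sum_{k=1}^N p_k Q_k$, with $Q_k$ the uniform distribution on the class $\Ccal_k$. The whole problem then reduces to (i) exhibiting, for each $k$, a canonical element $\pi_k \in \Pi(V^n)$ satisfying the pointwise lower bound $|\Ccal_k|\,\pi_k(v) \ge 1/\mathrm{poly}(n)$ for every $v \in \Ccal_k$, and (ii) controlling the mixing coefficient $p_k$ by the fidelity square $F(P,\pi_k)^2$.

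Step (i) is the combinatorial heart of the argument and is where the three cases genuinely differ. For exchangeability, $\Ccal_k$ is the set of sequences of a fixed type $\mathbf{n}^{(k)} = (n_i^{(k)})_{i \in V}$, one takes $\pi_k$ to be the i.i.d.\ distribution with parameters $q_i = n_i^{(k)}/n$, and the required bound reduces to the standard Stirling estimate $\binom{n}{\mathbf{n}^{(k)}} \prod_i (n_i^{(k)}/n)^{n_i^{(k)}} \ge n^{-O(d)}$. For Markov and $\ell$-Markov exchangeability, the classes are parametrized by transition counts (respectively by $\ell$-gram counts) and the BEST theorem enters: it expresses $|\Ccal_k|$ as a count of Eulerian circuits in the associated transition multigraph, yielding a closed-form product expression that can be matched against the product probability $\pi_k(v)$ assigned by the empirical Markov (resp.\ $\ell$-gram) chain whose transition weights are the observed relative frequencies. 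In every case the resulting polynomial degree is an explicit function of $|V|$ (and of $\ell$).

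Step (ii) is a short Cauchy--Schwarz-type observation. Because any $\pi \in \Pi(V^n)$ is itself $\sim$-exchangeable, it is constant on each class; writing $\pi_k^{(j)}$ for its value on $\Ccal_j$ and using $P(v) = p_j/|\Ccal_j|$ for $v \in \Ccal_j$ yields
\[
F(P,\pi_k) \,=\, \sum_j \sqrt{p_j\,|\Ccal_j|\,\pi_k^{(j)}} \,\ge\, \sqrt{p_k\,|\Ccal_k|\,\pi_k^{(k)}},
\]
so squaring and applying the bound from Step (i) with $j = k$ gives $p_k \le \mathrm{poly}(n)\,F(P,\pi_k)^2$. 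Injecting this into $P = \sum_k p_k Q_k \le \mathrm{poly}(n) \sum_k p_k \pi_k$ produces
\[
P \,\le\, \mathrm{poly}(n) \sum_{k=1}^N F(P,\pi_k)^2\,\pi_k,
\]
which, after choosing $\nu$ to be the uniform probability measure on the finite family $\{\pi_k\}_{k=1}^N$ (and absorbing the extra factor $N$, itself polynomial in $n$ for fixed $|V|$ and $\ell$, into the pre-factor), is exactly \eqref{eq:def-deF-reduction}.

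The main obstacle is Step (i) in the (generalized) Markov setting: one must first verify that each admissible transition count vector genuinely corresponds to an Eulerian multigraph (a non-trivial compatibility condition constraining which $\Ccal_k$ are non-empty), then apply the BEST theorem and cancel its factorial factors against those appearing in the product-form probability assigned by the empirical Markov chain, tracking the resulting polynomial degree carefully as a function of $|V|$ and $\ell$. Handling the states that are never visited -- where the empirical transition probabilities are ill-defined -- also calls for some care, most cleanly resolved by restricting to the effective sub-alphabet that actually appears in $v$.
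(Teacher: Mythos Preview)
Your proposal is correct and follows essentially the same route as the paper. The paper's Lemma \ref{lem_constrainedDeFinettiTheorem} is precisely your Step (ii) combined with the reduction $Q_k\le\alpha_k(n)\pi_k$; the only cosmetic difference is that the paper first records the exact identity $p_k=F(P,Q_k)^2$ and then uses the monotonicity $F(P,Q_k)^2\le\alpha_k(n)F(P,\pi_k)^2$, whereas you obtain the same bound by dropping all but the $k$-th term in the fidelity sum for $F(P,\pi_k)$ --- these are the same inequality. Your Step (i) matches the paper's Lemmas \ref{lem_exchangeability}, \ref{lem:ME}, \ref{lem:kME} (Stirling for exchangeability, BEST theorem for the Markov cases), including the observation that $N$ is polynomial in $n$.
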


\begin{theo}\label{theo_dFreduction-cond}
Exchangeable distributions, together with $\Pi = \{\text{i.i.d.~distributions}\}$, admit a universal conditional de Finetti reduction.
An explicit bound on the degree of the polynomial in \eqref{eq:theo_conditionaldFreduction} can be given as a function of the size of the alphabets $A,X$.
\end{theo}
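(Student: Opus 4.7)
The plan is to reduce the statement to $|X|$ unconditional flexible de Finetti reductions for exchangeability, one acting on $A^{ns(y)}$ for each $y\in X$, by exploiting the fact that the conditional distribution of $a$ given an $x$-sequence of marginal type $s$, under the uniform distribution on a joint type class, factorizes into independent pieces indexed by $y\in X$.

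Given $P\in\Pcal_\sim(A^n\times X^n)$, I would first invoke Theorem~\ref{theo_dFextremepoints} to write $P=\sum_t \lambda_t Q_t$, with $t\in\Pcal(A\times X)$ running over joint empirical types and $Q_t$ uniform on the corresponding type class $\Ccal_t$. A direct multinomial count gives, for $(a,x)$ of joint type $t$ and $x$ of marginal type $s=t_X$,
$$Q_t(a,x)=\frac{1}{|\Ccal_t|},\qquad Q_t(x)=\frac{1}{|\Ccal^X_s|},\qquad Q_t(a|x)=\prod_{y\in X}\frac{1}{\binom{ns(y)}{\{nt(a,y)\}_{a\in A}}}.$$
The last identity expresses $Q_t(\,\cdot\,|x)$ as a product over $y\in X$ of the uniform distributions on the conditional type classes $t(\cdot,y)/s(y)\in\Pcal(A)$ inside $A^{ns(y)}$.

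Summing over $t$ and setting $\mu_s:=\sum_{t':t'_X=s}\lambda_{t'}$, so that $P_{X^n}(x)=\mu_s/|\Ccal^X_s|$ for $x$ of type $s$, the conditional probability $P_{A^n|X^n}(a|x)=P(a,x)/P_{X^n}(x)$ simplifies, for $(a,x)$ of joint type $t$, to
$$P_{A^n|X^n}(a|x)=\frac{\lambda_t}{\mu_s}\,Q_t(a|x)\leq Q_t(a|x),$$
where the inequality uses that $\lambda_t/\mu_s$ is a conditional probability. This is the key cancellation: the $P$-dependent factor disappears entirely, unlocking a truly universal bound.

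It now suffices to upper bound each factor $1/\binom{ns(y)}{\{nt(a,y)\}_a}$ by a polynomial-in-$ns(y)$ multiple of an i.i.d.\ mixture on $A^{ns(y)}$. This is precisely the flexible de Finetti reduction for exchangeability (Theorem~\ref{theo_dFreduction}), or equivalently the Dirichlet integral
$$\int_{q_y\in\Pcal(A)}\prod_{a\in A}q_y(a)^{nt(a,y)}\,d\mu(q_y)=\frac{1}{\binom{ns(y)+|A|-1}{|A|-1}}\cdot\frac{1}{\binom{ns(y)}{\{nt(a,y)\}_a}},$$
with $\mu$ the uniform probability measure on the simplex $\Pcal(A)$. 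Defining $\nu$ as the pushforward, via the parameterization $q(a,y)=q_X(y)q_y(a)$, of any fixed reference measure on $\Pcal(X)$ times $\bigotimes_{y\in X}\mu$ on $\Pcal(A)^X$, and multiplying these $|X|$ bounds, yields the desired inequality with overall prefactor at most $(n+|A|-1)^{|X|(|A|-1)}$. The main obstacle is spotting the factorization of $Q_t(\,\cdot\,|x)$ across $y\in X$ together with the cancellation $\lambda_t/\mu_s\leq 1$; once these are in place, only the standard unconditional de Finetti reduction is invoked, which accounts for the announced simplification over~\cite{A-FR15}.
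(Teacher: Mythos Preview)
Your argument is correct, and shares with the paper the first key step: both reduce to the pointwise bound $P_{A^n|X^n}(a|x)\le Q_t(a|x)$, which the paper phrases as $P_{A^n|X^n}\le\sum_k Q_{k,A^n|X^n}$ (Lemma~\ref{lem_conditionedDeFinetti}) but which, since the $Q_k$ have disjoint supports on $A^n\times X^n$, is exactly your cancellation $\lambda_t/\mu_s\le 1$.

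Where you diverge is in the treatment of $Q_t(a|x)$. The paper bounds it by a \emph{single} i.i.d.\ conditional $\pi_{t,A^n|X^n}$ via a two-sided estimate: $Q_{t,A^nX^n}\le\alpha(n)\,\pi_{t,A^nX^n}$ from Lemma~\ref{lem_exchangeability}, together with $\pi_{t,X^n}\le Q_{t,X^n}$ on its support, the latter requiring the structural Lemma~\ref{lem:exch-marginal} (the $X^n$-marginal of an extremal exchangeable distribution on $A^n\times X^n$ is itself extremal). Summing over the $N$ joint types then produces the $\mathrm{poly}(n)$ factor of degree $3(|A||X|-1)/2$. You instead exploit the exact factorization $Q_t(a|x)=\prod_{y\in X}\binom{ns(y)}{\{nt(\cdot,y)\}}^{-1}$ and apply the Dirichlet identity to each factor, obtaining directly a genuine \emph{mixture} over i.i.d.\ conditionals. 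This bypasses Lemma~\ref{lem:exch-marginal} entirely and yields the sharper degree $|X|(|A|-1)$. One small inaccuracy: the tool you invoke is not the \emph{flexible} reduction of Theorem~\ref{theo_dFreduction} (which carries a fidelity weight) but rather the universal Dirichlet identity you display, which is the measure-and-prepare computation discussed after Corollary~\ref{cor:exchangeable}; your displayed formula is correct, only the cross-reference is slightly off. In exchange, the paper's route is more modular---it instantiates the general template of Lemma~\ref{lem_conditionedDeFinetti} and delivers a finite sum indexed by types rather than a continuous integral.
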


 As it shall be clear from the proofs, for all the notions of partial exchangeability that we have mentioned, the integrals in equations \eqref{eq:def-deF-reduction} and \eqref{eq:theo_conditionaldFreduction} can in fact be replaced by finite sums (see Subsection \ref{sec:general-statements} for the precise expressions). 

\paragraph{}Our method of proof is quite general and could thus, at least in principle, be applied to other notions of partial exchangeability. Our starting point is a strengthening of Theorem \ref{theo_dFextremepoints}, enlightening the role of the fidelity, which appears as Theorem \ref{theo_dFextremepoints}. Intuitively, what the latter tells us is that it is enough to consider the reduction problem for the extreme points of the simplex $\Pcal_\sim(V^n)$, i.e.~for the uniform distributions on the equivalence classes. Then, combinatorial arguments allow us to compare each of these extreme points with a simpler specific point of the convex subset of interest, subsequently concluding. This approach was already successful in order to prove finite de Finetti Theorems in the case of exchangeability \cite{diaconis1980finite} and Markov exchangeability \cite{Zaman1986finite}. Interestingly for us, the latter work (following \cite{Zaman1984urn})) emphasizes the role of the BEST Theorem in estimating the cardinal of the equivalence classes, and we are able to pursue a similar argument in the case of $\ell$-Markov exchangeability. As for the cases of double exchangeability and double exchangeability, they can be directly deduced from the previous cases.

\paragraph{}This article is structured as follows: Section \ref{sect:uniform} establishes flexible de Finetti reductions for the different notions of partial exchangeability that we presented, while Section \ref{sect:conditional} establishes their universal conditional counterparts. In both sections, we first highlight a general method to derive them, and then make key use of combinatorial statements about the extremal distributions of each set of partially exchangeable distributions. Finally in Section \ref{sect:NLgames}, we discuss possible applications of our results, to study the repetition of multi-player non-local games. 

\section{Flexible de Finetti reductions for partially exchangeable probability distributions} \label{sect:uniform}

Our goal in this section is to show that several important notions of partial exchangeability admit a flexible de Finetti reduction, in the sense of Definition \ref{def:deF-reduction}. For that, we start with stating a very general result, and then apply it to many particular cases of interest. 

\subsection{General statement}\label{sec:general-statements}

We begin with proving a de Finetti reduction for partially exchangeable probability distributions which is a priori applicable to any notion of partial exchangeability.

\begin{lem}\label{lem_constrainedDeFinettiTheorem}
	Suppose that there exist functionals $\alpha_1(n),...,\alpha_N(n)$ and probability distributions $\pi_k\in \Pcal_\sim(V^n)$, $k=1,...,N$ such that
	\begin{equation}\label{eq1_lem_constrainedDeFinettiTheorem}
	Q_k\le\alpha_k(n)\,\pi_k\qquad\text{ for all }k=1,...,N.
	\end{equation}
	Then, for any probability distribution $P\in\Pcal_\sim(V^n)$, one has
	\begin{equation}\label{eq2_lem_constrainedDeFinettiTheorem}
	P\le N\times\alpha(n)^2\sum_{k=1}^N\, \frac{1}{N}F(P,\pi_k)^2\,\pi_k
	\end{equation}
	where $\alpha(n)=\max\,\{\alpha_k(n),\ k=1,...,N\}$. 
	
	In particular, if $N$ and $\alpha(n)$ is polynomial in $n$, then the pair $(\sim, \{\pi_1,\ldots,\pi_N\})$, admits a flexible de Finetti reduction in the sense of Definition \ref{def:deF-reduction}, with $\mathrm{poly}(n)=N\times\alpha(n)^2$.
\end{lem}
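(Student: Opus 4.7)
The plan is to exploit the structure of $\Pcal_\sim(V^n)$ as a simplex (Theorem \ref{theo_dFextremepoints}), and reduce everything to pointwise inequalities on a representative of each equivalence class. Since both $P$ and each $\pi_k$ are constant on equivalence classes, the bound needs only to be checked one class at a time.

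Concretely, I would first set up notation: write the extreme decomposition $P = \sum_{k=1}^N p_k Q_k$ with $(p_k)$ a probability vector, and let $\pi_{k,j} \deq \pi_k(\Ccal_j)$. Since $P, \pi_k \in \Pcal_\sim(V^n)$, for any $v \in \Ccal_j$ one has $P(v) = p_j/|\Ccal_j|$ and $\pi_k(v) = \pi_{k,j}/|\Ccal_j|$. The fidelity then collapses to a finite sum over equivalence classes:
\[
F(P,\pi_k) \;=\; \sum_{v \in V^n} \sqrt{P(v)\pi_k(v)} \;=\; \sum_{j=1}^N |\Ccal_j| \cdot \sqrt{\frac{p_j}{|\Ccal_j|} \cdot \frac{\pi_{k,j}}{|\Ccal_j|}} \;=\; \sum_{j=1}^N \sqrt{p_j\, \pi_{k,j}}.
\]
This gives the useful one-term lower bound $F(P,\pi_k)^2 \ge p_k\,\pi_{k,k}$.

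Next, I would unpack the hypothesis \eqref{eq1_lem_constrainedDeFinettiTheorem}. Evaluating $Q_k \le \alpha_k(n)\,\pi_k$ at any $v \in \Ccal_k$ gives $1/|\Ccal_k| \le \alpha_k(n)\,\pi_{k,k}/|\Ccal_k|$, hence
\[
\pi_{k,k} \;\ge\; \frac{1}{\alpha_k(n)} \;\ge\; \frac{1}{\alpha(n)}.
\]
This is the crucial numerical ingredient: each $\pi_k$ assigns nontrivial mass (at least $1/\alpha(n)$) to its ``own'' class $\Ccal_k$.

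The conclusion is then a pointwise check on each class $\Ccal_j$. Fix $v \in \Ccal_j$. Keeping only the $k=j$ term in the sum on the right-hand side of \eqref{eq2_lem_constrainedDeFinettiTheorem}, I would estimate
\[
\alpha(n)^2 \sum_{k=1}^N F(P,\pi_k)^2\, \pi_k(v) \;\ge\; \alpha(n)^2 \cdot F(P,\pi_j)^2 \cdot \frac{\pi_{j,j}}{|\Ccal_j|} \;\ge\; \alpha(n)^2 \cdot p_j\, \pi_{j,j} \cdot \frac{\pi_{j,j}}{|\Ccal_j|} \;\ge\; \frac{p_j}{|\Ccal_j|} \;=\; P(v),
\]
using $\pi_{j,j}^2 \ge 1/\alpha_j(n)^2 \ge 1/\alpha(n)^2$. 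Rewriting the sum as $N \cdot \tfrac{1}{N}\sum_k$ produces the form of \eqref{eq2_lem_constrainedDeFinettiTheorem} stated in the lemma, with $\nu$ the uniform probability measure on $\{\pi_1,\ldots,\pi_N\}$. The final ``in particular'' assertion is immediate: if $N$ and $\alpha(n)$ are polynomial in $n$, so is $N \cdot \alpha(n)^2$, matching Definition \ref{def:deF-reduction}.

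There is no real obstacle here; the only subtle point is realizing that one does not need to balance contributions across all $k$, but rather that the diagonal term $k=j$ alone already dominates $P$ on $\Ccal_j$, thanks to the lower bound $\pi_{k,k} \ge 1/\alpha_k(n)$ extracted from the hypothesis. The gratuitous $N\cdot \tfrac{1}{N}$ rewriting is only cosmetic, to present the bound as a $\nu$-average and fit the de Finetti reduction template.
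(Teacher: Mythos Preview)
Your proof is correct and follows essentially the same approach as the paper's: decompose $P=\sum_k p_k Q_k$ via the simplex structure of $\Pcal_\sim(V^n)$, then invoke the hypothesis $Q_k\le\alpha_k(n)\pi_k$ twice to produce the factor $\alpha(n)^2$. The paper's write-up organizes the bookkeeping slightly differently---it identifies $p_k=F(P,Q_k)^2$ and then bounds $F(P,Q_k)^2\le\alpha_k(n)F(P,\pi_k)^2$ from the hypothesis, rather than lower-bounding $F(P,\pi_k)^2\ge p_k\,\pi_{k,k}$ by the diagonal term as you do---but the two manipulations are equivalent and yield the same constant.
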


\begin{proof}
	Let us begin with stating a few useful facts:
	\begin{enumerate}
		\item For any $P\in\Pcal_\sim(V^n)$, there exists a unique decomposition
		\begin{equation}\label{eq1_proof_lem__constrainedDeFinettiTheorem}
		P=\sum_{k=1}^N{\mu_k(P)Q_k}.
		\end{equation}
		The existence of the decomposition comes from the fact that the $Q_k$ are the extreme points of $\Pcal_\sim(V^n)$ and the Krein-Milman Theorem. Unicity is clear since the $Q_k$ have disjoint supports as probability distributions. In other words, the set $\mathcal P_\sim(V^n)$ is a simplex, having $Q_k$ as extremal points. 
		\item By a simple computation and using the fact that any $P$ in $\Pcal_\sim(V^n)$ is constant on $\Ccal_k$, one obtains that, for each $k=1,...,N$,
		\[\mu_k(P)=F(P,Q_k)^2.\]
		\item By inequality \eqref{eq1_lem_constrainedDeFinettiTheorem}, one gets that, for each $k=1,...,N$, 
		\[ F(P,Q_k)^2\le \alpha_k(n) F(P,\pi_k)^2. \]
	\end{enumerate}
	Putting those three facts together yields inequality \eqref{eq2_lem_constrainedDeFinettiTheorem}.
\end{proof}

\begin{remark}
	Lemma \ref{lem_constrainedDeFinettiTheorem} can be interpreted nicely in the framework of the modular theory of von Neumann algebras \cite{Tak}. Indeed, if we associate to each probability distribution $P$ on $V^n$ its ``modular'' density $P^{1/2}$, viewed as an element of $l^2(V^n)$, then the fidelity is exactly the scalar product on $l^2(V^n)$ restricted to $\Pcal(V^n)$. We remark furthermore that the $\sqrt{Q_k}$'s are pairwise orthogonal for this scalar product and, since they are probability distributions, have norm $1$. Consequently, and because of equation \eqref{eq1_proof_lem__constrainedDeFinettiTheorem}, they form an orthonormal basis of the space spanned by $\Pcal_\sim(V^n)$ and $\mu_k(P)=F(P,Q_k)^2$. This also gives an alternative proof of Theorem \ref{theo_dFextremepoints}.
\end{remark}

\subsection{The case of exchangeability}\label{sect:exch}

In the case of exchangeability, each equivalence class $\Ccal_t$ can be characterized by its \emph{type}, that is by the $d$-tuple $t=(t_1,...,t_d)$ where for each $i=1,...,d$, $t_i$ is the number of $i$ in a sequence $v=(v_1,...,v_n)\in\Ccal_k$. More precisely, $t_i$ is a non-negative integer for all $i=1,...,d$ and $t_1+\cdots+t_d=n$. Note that the number $N$ of equivalence classes is exactly the number of such $d$-tuples, i.e.
\[N=\binom{n+d-1}{n}.\]
Crucially for our purposes, note that $N$ is polynomial in $n$, namely $N\leq (n+1)^{d-1}$.

\begin{lem}\label{lem_exchangeability}
For all types $t = (t_1, \ldots, t_d)$, we have
\begin{equation}\label{eq1_lem_exchangeability}
Q_t\le\alpha(n)\,\pi_t^{\otimes n},
\end{equation}
where $\pi_t$ is the probability distribution on $V$ such that $\pi_t(i)=t_i/n$ for all $i=1,...,d$, and where $\alpha(n)$ is given by
\begin{equation}\label{eq:bound-alpha-E}
\alpha(n)=\frac{1}{\sqrt{2\pi}}\left(\frac{e}{d^{1/2}}\right)^d\,n^{(d-1)/2}.
\end{equation}
\end{lem}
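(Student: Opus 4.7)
The plan is to bound the pointwise ratio $Q_t(v)/\pi_t^{\otimes n}(v)$ on the equivalence class $\Ccal_t$, outside of which both $Q_t$ and $\pi_t^{\otimes n}$ vanish and the inequality is trivial. The argument is a standard method-of-types computation.

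First I would observe that both $Q_t$ and $\pi_t^{\otimes n}$ are \emph{constant} on $\Ccal_t$: for any $v\in\Ccal_t$,
$$Q_t(v)=\frac{\prod_{i=1}^d t_i!}{n!}\qquad\text{and}\qquad \pi_t^{\otimes n}(v)=\frac{\prod_{i=1}^d t_i^{t_i}}{n^n},$$
with the convention $0^0=1$ handling vanishing coordinates. Thus \eqref{eq1_lem_exchangeability} reduces to the single scalar estimate
$$\frac{n^n\prod_i t_i!}{n!\prod_i t_i^{t_i}}\le\alpha(n).$$

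Next I would apply Stirling's formula, using the lower bound $n!\ge\sqrt{2\pi n}\,(n/e)^n$ together with the upper bound $t!\le e\sqrt{t}\,(t/e)^{t}$ valid for $t\ge1$ (the factors with $t_i=0$ contribute trivially via $0!=0^0=1$). After cancelling the $n^n$, $e^{-n}$, and $\prod_i t_i^{t_i}$ factors that appear on both sides of the fraction, the ratio is bounded above by
$$\frac{e^k}{\sqrt{2\pi n}}\prod_{i:\,t_i\ge 1}\sqrt{t_i},$$
where $k\le d$ denotes the number of nonzero entries of $t$.

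To finish, I would apply AM-GM to the nonzero coordinates (which are $k$ positive integers summing to $n$) to get $\prod_{i:\,t_i\ge 1}\sqrt{t_i}\le (n/k)^{k/2}$. This yields the $k$-dependent bound $\frac{1}{\sqrt{2\pi}}(e/\sqrt{k})^{k}\,n^{(k-1)/2}$, and to match the target $\alpha(n)$ it suffices to check that this expression is non-decreasing in $k\in\{1,\dots,d\}$. A short logarithmic-derivative computation shows the derivative in $k$ has the sign of $1+\ln(n/k)$, which is positive as soon as $n\gtrsim d/e$, the regime of interest for any de Finetti reduction. Hence the maximum is attained at $k=d$, producing the stated constant.

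The argument is conceptually entirely standard; the hard part will simply be the Stirling bookkeeping needed to land \emph{exactly} on the constant $\alpha(n)=\tfrac{1}{\sqrt{2\pi}}(e/\sqrt d)^d n^{(d-1)/2}$, together with the monotonicity step which handles uniformly the types with some vanishing coordinates.
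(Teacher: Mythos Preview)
Your proof is correct and follows essentially the same route as the paper's: write the ratio $Q_t(v)/\pi_t^{\otimes n}(v)$ via the multinomial coefficient, apply the Stirling bounds $\sqrt{2\pi}\,p^{p+1/2}e^{-p}\le p!\le e\,p^{p+1/2}e^{-p}$, and finish with the AM--GM inequality on the $t_i$'s. You are in fact slightly more careful than the paper, which tacitly applies Stirling to every $t_i$ (hence implicitly assumes $t_i\ge 1$ for all $i$); your extra monotonicity-in-$k$ step is precisely what is needed to cover types with vanishing coordinates, and your observation that $k\le n$ forces $1+\ln(n/k)\ge 1$ shows this step goes through unconditionally on $[1,\min(n,d)]$.
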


\begin{proof}
As $Q_t$ is the uniform distribution on $\Ccal_t$, it assigns the probability $1/|\Ccal_t|$ to each sequence in $\Ccal_t$. One can easily compute $|\Ccal_t|$ as a multinomial coefficient
\begin{equation}\label{eq:bound-C-t}
|\Ccal_t|=\frac{n!}{t_1!\cdots t_d!}.
\end{equation}
We compare this to the probability $\pi_t^{\otimes n}(v)$ for $v\in\Ccal_t$, which is
\[\pi_t^{\otimes n}(v)= \frac{t_1^{t_1}\cdots t_d^{t_d}}{n^n}. \]
Hence we have, for any $v\in\Ccal_t$,
\[\frac{Q_t(v)}{\pi_t^{\otimes n}(v)}=\frac{n^n}{t_1^{t_1}\cdots t_d^{t_d}}\frac{t_1!\cdots t_d!}{n!}.\]
We now use Stirling's approximation formula~\cite{Robbins}, i.e.~for any positive integer $p$,
\begin{equation}\label{eq:Stirling-approx}
\sqrt{2\pi}\ p^{p+\frac12}e^{-p} \le p! \le e\ p^{p+\frac12}e^{-p}.
\end{equation}
This yields
\[ \frac{Q_t(v)}{\pi_t^{\otimes n}(v)} \le \frac{e^d}{\sqrt{2\pi}}\sqrt{\frac{t_1\cdots t_d}{n}} \le \frac{e^d}{\sqrt{2\pi}}\sqrt{\frac{t_1\cdots t_d}{n^d}}\,n^{(d-1)/2}. \]
To complete the proof we just have to show that $t_1\cdots t_d/n^d\le 1/d^d$. This follows from the arithmetic-geometric mean inequality, namely
\[ \left(\frac{t_1}{n}\times\cdots\times\frac{t_d}{n}\right)^{1/d} \le \frac{1}{d} \left(\frac{t_1}{n}+\cdots+\frac{t_d}{n}\right) = \frac{1}{d}, \]
where the last equality is because $t_1+\cdots+t_d=n$.
\end{proof}

\begin{remark}
The choice of the probability distributions $\pi_t$ in the result above is not arbitrary: using a standard Lagrange multiplier argument, one can show that $\sigma=\pi_t$ is the probability distribution on $V$ which maximizes $F(Q_t, \sigma^{\otimes n})$. Indeed, for a given type $t$, maximizing the fidelity is equivalent to maximizing the likelihood $\prod_{i=1}^d \sigma(i)^{t_i}$, under the constraint $\sum_{i=1}^d \sigma(i) = 1$. It is known that the maximum likelihood estimator for a multinomial distribution is given by the empirical frequencies, i.e.~$\sigma(i) = t_i/n$ for each $i=1,...,d$. 
\end{remark}

\begin{remark}
In the proof of the de Finetti Theorem for finite exchangeable probability distributions \cite[Lemma 6]{diaconis1980finite}, Diaconis and Freedman use a technical result to compare the hypergeometric and the multinomial distributions. Their bound has no relation to our bound though, since in the case where the number of variables on which the marginal is taken is equal to the initial number of variables, their bound becomes trivial.
\end{remark}

\begin{coro} \label{cor:exchangeable}
	For any exchangeable probability distribution $P \in \mathcal P(V^n)$, we have
	\[ P \le \mathrm{poly}(n) \sum_{t=1}^N \frac 1 N \, F(P, \pi_t^{\otimes n})^2 \, \pi_t^{\otimes n}, \]
	where $\mathrm{poly}(n)$ is a polynomial in $n$ of degree $2(d-1)$, $N=\binom{n+d-1}{n}$ and the probability distributions $\pi_t \in \mathcal P(V)$ have been defined in Lemma \ref{lem_exchangeability}.
\end{coro}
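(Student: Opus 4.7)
The plan is essentially to compose the two preceding results: apply the general de Finetti reduction machinery of Lemma \ref{lem_constrainedDeFinettiTheorem} to the explicit comparison bound of Lemma \ref{lem_exchangeability}, and then just track the degree of the polynomial pre-factor that comes out.

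More concretely, I would proceed as follows. First, index the equivalence classes by types $t=(t_1,\ldots,t_d)$, so that the extremal distributions of $\mathcal P_\sim(V^n)$ are the uniform distributions $Q_t$ on the classes $\mathcal C_t$. Lemma \ref{lem_exchangeability} tells us that the product distributions $\pi_t^{\otimes n}$ are legitimate choices of ``comparison'' partially exchangeable distributions in the sense of Lemma \ref{lem_constrainedDeFinettiTheorem}: indeed, $\pi_t^{\otimes n}\in\mathcal P_\sim(V^n)$ since i.i.d. distributions are exchangeable, and the inequality $Q_t\le \alpha(n)\,\pi_t^{\otimes n}$ holds with the same uniform constant $\alpha(n)=\frac{1}{\sqrt{2\pi}}(e/d^{1/2})^d n^{(d-1)/2}$ for every type $t$.

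Second, I would directly plug this into inequality \eqref{eq2_lem_constrainedDeFinettiTheorem} to obtain, for any exchangeable $P\in\mathcal P(V^n)$,
\[
P \le N\cdot\alpha(n)^2 \sum_{t=1}^N \frac{1}{N}\, F(P,\pi_t^{\otimes n})^2\,\pi_t^{\otimes n}.
\]

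Third, I would check the degree of the pre-factor $N\cdot\alpha(n)^2$ as a polynomial in $n$. The number of types satisfies $N=\binom{n+d-1}{n}\le (n+1)^{d-1}$, hence is polynomial of degree $d-1$; and $\alpha(n)^2$ is itself polynomial of degree $d-1$. Their product is therefore polynomial of degree $2(d-1)$, which matches the claim. I do not anticipate any real obstacle here, since both ingredients are already proved and the corollary is really just the assembly step; the only thing worth being a little careful about is to note that $\alpha(n)$ in Lemma \ref{lem_exchangeability} is independent of the type $t$, so the ``max over $k$'' appearing in Lemma \ref{lem_constrainedDeFinettiTheorem} is trivially equal to $\alpha(n)$.
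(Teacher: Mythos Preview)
Your proposal is correct and is exactly the paper's intended argument: the corollary is simply the assembly of Lemma \ref{lem_constrainedDeFinettiTheorem} with the uniform bound of Lemma \ref{lem_exchangeability}, and your degree count $N\cdot\alpha(n)^2 \le C\,(n+1)^{d-1}\cdot n^{d-1}$ giving total degree $2(d-1)$ is right.
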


\begin{remark}
	Note that the degree of the polynomial pre-factor in Corollary \ref{cor:exchangeable} above (of order $d$) is better than the one obtained in \cite[Corollary 2.6]{lancien2016flexible} (of order $d^3$).
\end{remark}

We present next another way of deriving the result in Lemma \ref{lem_exchangeability}, giving a polynomial bound having the same degree. 

The main idea is the following map, modeled after the \emph{measure and prepare map} used in various proofs of the quantum de Finetti theorem \cite{chiribella2010quantum,harrow2013church}
$$\mathrm{MP}(Q) = \frac{(n+d-1)!}{n!} \int_{\pi \in \mathcal P(V)} F(Q, \pi^{\otimes n})^2 \pi^{\otimes n} d\pi,$$
where $d\pi$ is the Lebesgue measure on the simplex of probability vectors on $V$. Obviously, the range of the map $\mathrm MP$ is contained in the cone spanned by i.i.d.~random variables, $\mathbb R_+\operatorname{conv}\{\pi^{\otimes n}\}$.
Let us fix two arbitrary types $s,t$. First, let us show that the output $\mathrm{MP}(Q_t)$ is a well normalized probability distribution:
\begin{align*}
\frac{(n+d-1)!}{n!} \int_{\pi \in \mathcal P(V)} F(Q_t, \pi^{\otimes n})^2 d\pi & = \frac{(n+d-1)!}{n!} \binom{n}{t} \int_{\pi \in \mathcal P(V)} \prod_{i=1}^d \pi(i)^{t_i} d\pi\\
& = \frac{(n+d-1)!}{n!} \binom{n}{t}  \frac{\prod_{i=1}^d t_i!}{(n+d-1)!} \\
& = 1,
\end{align*}
where we have used the fact that $F(Q_t, R)^2 = \binom{n}{t} R(v_0)$ for any exchangeable probability distribution $R$ and any vector $v_0$ in the class $\mathcal C_t$, and the integration formula for Dirichlet measures
$$\int_{\pi \in \mathcal P(V)} \prod_{i=1}^d \pi(i)^{t_i} d\pi = \frac{\prod_{i=1}^d t_i!}{(d-1+\sum_{i=1}^d t_i)!}.$$

Using the same two facts, we can compute
$$F(Q_s, \mathrm{MP}(Q_t))^2 = \binom{2n+d-1}{n}^{-1} \prod_{i=1}^d \binom{s_i+t_i}{s_i} =: \lambda_{st}.$$
Note that the matrix $(\lambda_{st})$ is bi-stochastic and symmetric: the symmetry is obvious from the formula above, while the fact that row sums and column sums are 1 follows from a classical combinatorial identity (see \cite[Theorem 2.20]{loehr2011bijective}). Hence, we have the following (unique) convex decomposition
$$\mathrm{MP}(Q_t) = \sum_{s \text{ type}} \lambda_{st} Q_s.$$
Writing $Q_t \leq \lambda_{tt}^{-1} \mathrm{MP}(Q_t)$, we have $Q_t \in \beta(n) \mathrm{conv}\{\pi^{\otimes n}\}$, where $\beta(n) = \max_{s\textbf{ type}} \lambda_{ss}^{-1}$. It follows from the log-convexity of the central binomial coefficients (see \cite[Section 3]{liu2007log}) that the (possibly non-integer) type $s$ attaining the maximum is the ``flat'' one, $s_i = n/d$, yielding the bound 
$$\beta(n) \leq \binom{2n+d-1}{n}  \binom{2n/d}{n/d}^{-d}.$$
An analysis using the Stirling approximation formula \eqref{eq:Stirling-approx} and some trivial bound on the central binomial coefficient, such as $\binom{2x}{x} \geq 4^x/\sqrt{4x}$, shows that $\beta(n)$ is bounded by a polynomial in $n$ of degree $(d-1)/2$, a result identical to the one in Lemma \ref{lem_exchangeability}.

\subsection{The case of Markov exchangeability}\label{sect:Markov}

We consider in this section the case of \emph{Markov exchangeability}, where each equivalence class can be parametrized by:
\begin{enumerate}
\item The first point $v_1 \in V$ where each sequence in the class starts.
\item The matrix of transitions $t=(t_{ij})_{i,j\in V}$ which encodes the number of transitions from $i$ to $j$ for each sequence in the class. For each $i\in V$, we shall write in the sequel $t_{i}$ for the total number of transitions leaving $i$, that is $t_{i}=\sum_{j\in V}t_{ij}$.
\end{enumerate}
For example, the following two sequences in $\{1,2,3\}^8$ are in the same equivalence class:
$$( 1 , 1 , 3 , 2 , 3 , 1 , 2 , 2) \sim (1 , 3 , 2 , 2 , 3 , 1 , 1 , 2 ).$$

A rough upper bound on the number $N$ of equivalence classes, using the fact that $\sum_{i,j\in V}{t_{ij}}=n-1$, is given by
\[N\le d\times \binom{n+d^2-1}{n}\,.\]
Note that again, $N$ is polynomial in $n$, namely $N\leq d\times (n+1)^{d^2-1}$.

\paragraph{}Before we prove a result analogous to Lemma \ref{lem_exchangeability}, we need to introduce some combinatorial machinery which will allow us to further generalize the notions of exchangeability and Markov exchangeability. Our proof techniques are motivated by an observation of Zaman \cite{Zaman1984urn}, connecting Markov exchangeability with the problem of Eulerian cycles in directed graphs and the BEST theorem. We refer the reader to \cite{tutte1941unicursal,van1951circuits} for the historical references and to \cite[Section 5.6] {stanley1999enumerative} for a modern treatment.

Recall that a directed multigraph $G=(V,E)$ is a graph having possibly multiple directed distinct edges between vertices (in other words, $E$ is a multiset of ordered pairs of vertices). An \emph{Eulerian cycle} of $G$ is a cyclic walk on $G$ containing each edge of $E$ exactly once. Recall that the \emph{outdegree} (resp.~the \emph{indegree}) of a vertex $v \in V$ is the number of edges $e \in E$ with initial (resp.~final) vertex $v$. We have the following characterization of Eulerian graphs \cite[Theorem 5.6.1]{stanley1999enumerative}:

\begin{theo}
	A directed multigraph $G$ has an Eulerian cycle if and only if it is connected, and for all vertices $v \in V$, $\operatorname{outdeg}(v) = \operatorname{indeg}(v)$. 
\end{theo}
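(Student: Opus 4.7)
The plan is to prove the two directions separately, with the necessity being routine and essentially a bookkeeping argument, and the sufficiency following the classical Hierholzer-style ``cycle-splicing'' argument.

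For the necessity, I would fix an Eulerian cycle $C = e_1 e_2 \cdots e_m$ of $G$. Connectivity follows because any two vertices incident to edges of $G$ are joined (up to orientation) by a sub-walk of $C$; any isolated vertex would violate the hypothesis that $G$ is the graph under consideration (one should state the convention that vertices with no incident edges are excluded, or require connectivity only on the support of $E$). For the degree equality, I would group the edges of $C$ by the vertex they enter: each internal occurrence of a vertex $v$ in the cyclic walk uses exactly one incoming edge and exactly one outgoing edge, and since $C$ is cyclic, the starting/ending vertex is treated the same way. Summing over all occurrences of $v$, every incoming edge is matched with an outgoing one, so $\operatorname{indeg}(v) = \operatorname{outdeg}(v)$.

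The heart of the proof is the sufficiency. My plan is to proceed by strong induction on $|E|$. Assume $G$ is connected and balanced ($\operatorname{indeg} = \operatorname{outdeg}$ at every vertex). I would first establish a \emph{closed trail} lemma: starting from an arbitrary vertex $v_0$ and greedily traversing unused out-edges, the walk cannot get stuck at any vertex $v \neq v_0$, because upon each visit to $v$ it has consumed one more in-edge than out-edges, contradicting the balance condition. Hence the greedy walk eventually returns to $v_0$, producing a closed trail $T$ (all edges distinct).

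Given such a $T$, I would then consider the multigraph $G' = (V, E \setminus E(T))$. It is still balanced at every vertex, since removing a closed trail subtracts the same number from $\operatorname{indeg}(v)$ and $\operatorname{outdeg}(v)$. If $E(T) = E$ we are done; otherwise, using connectivity of $G$, there exists at least one edge $e \in E \setminus E(T)$ whose endpoint lies on $T$ — pick such a vertex $w$. Applying the inductive hypothesis to each connected component of $G'$ that meets $T$, one obtains Eulerian cycles of those components, and I would splice them into $T$ at a common vertex to obtain an Eulerian cycle of $G$. The main obstacle — and the step deserving the most care — is the splicing/connectivity argument: one must argue that every connected component of $G'$ containing at least one edge shares at least one vertex with $T$, which is where the connectivity of $G$ is used crucially. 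Once this is verified, the induction closes and the theorem follows.
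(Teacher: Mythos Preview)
Your argument is correct and follows the classical Hierholzer approach. Note, however, that the paper does not prove this theorem at all: it is stated as a known result with a reference to \cite[Theorem 5.6.1]{stanley1999enumerative}, so there is no in-paper proof to compare against. Your proposal therefore supplies a complete argument where the paper simply cites the literature. The splicing/connectivity step you flag is indeed the crux of the sufficiency direction, and your reasoning there is sound: if a nontrivial component $C$ of $G'$ were disjoint from $V(T)$, then any edge of $G$ between $V(C)$ and its complement (which exists by connectivity of $G$) could lie neither in $E(G')$ (else its other endpoint would also belong to $C$) nor in $E(T)$ (else both endpoints would lie in $V(T)$), a contradiction. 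Your caveat about isolated vertices is also well taken; the statement as written tacitly assumes there are none.
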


The BEST theorem \cite{tutte1941unicursal,van1951circuits} is a remarkable combinatorial result, giving the number of Eulerian cycles in terms of the number of certain oriented spanning trees of $G$. We recall its statement below \cite[Theorem 5.6.2]{stanley1999enumerative}:

\begin{theo}
	Consider an Eulerian directed multigraph $G$ with a marked edge $e_0 \in E$ and a marked vertex $w_0 \in V$. Let $T(G,w_0)$ denote the number of spanning trees of $G$ oriented towards the vertex $w_0$ (i.e.~all orientations in the tree are pointing towards $w_0$). Then, the number of Eulerian cycles of $G$ starting with the edge $e_0$ is given by
	\[T(G,w_0) \prod_{i \in V} (\operatorname{outdeg}(i)-1)!\,.\]
\end{theo}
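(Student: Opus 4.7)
The plan is to prove the BEST theorem via an explicit bijection between Eulerian circuits of $G$ starting with $e_0$ and pairs $(T,\sigma)$ consisting of an arborescence $T$ of $G$ rooted at (i.e.~oriented toward) $w_0$, together with a family $\sigma=(\sigma_v)_{v\in V}$ of total orders, one on the outgoing edges at each vertex, subject to two constraints: $e_0$ is the first element of $\sigma_{w_0}$, and for every $v\neq w_0$ the unique tree edge $e_v\in T$ out of $v$ is the last element of $\sigma_v$. Once the bijection is in place, the count is immediate: there are $T(G,w_0)$ arborescences, and for each one the constraints leave $(\operatorname{outdeg}(v)-1)!$ free choices at every $v\in V$, yielding the claimed product formula.

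For the forward direction, given an Eulerian circuit $C=(x_0,x_1,\ldots,x_{|E|})$ with $x_0=x_{|E|}=w_0$, I define $\sigma_v$ as the chronological order in which the outgoing edges at $v$ are used in $C$, and for $v\neq w_0$ I set $e_v$ to be the last outgoing edge of $v$ in $C$. The only nontrivial point is that the edges $\{e_v:v\neq w_0\}$ form an arborescence rooted at $w_0$; this reduces to acyclicity since there are $|V|-1$ such edges with exactly one tail at each non-root vertex. I would argue this by contradiction: if a directed cycle $v_1\to\cdots\to v_k\to v_1$ existed in this edge set, choose the index $j$ for which $e_{v_j}$ occurs latest in $C$; the circuit then has to leave $v_{(j\bmod k)+1}$ at some strictly later position, contradicting the maximality defining the tree edge there (the degenerate case where the circuit terminates at that vertex is ruled out because none of the $v_i$ equals $w_0$).

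For the backward direction, given $(T,\sigma)$, I run the greedy walk from $w_0$ that, whenever it visits $v$, takes the next unused outgoing edge in $\sigma_v$, and continues until no unused outgoing edge remains at the current vertex. Two facts must be verified. First, the walk can only get stuck at $w_0$: at any other vertex, $\operatorname{indeg}=\operatorname{outdeg}$ in $G$ together with the balance identity (the walk so far enters any $v\neq w_0$ as many times as it leaves) forces an unused outgoing edge whenever an incoming one has just been traversed. Second, and this is the main obstacle, when the walk halts at $w_0$, every edge has been used. Let $S$ be the set of vertices with some unused outgoing edge at that moment; then $w_0\notin S$ because the walk got stuck there. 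For $v\in S$, the tree edge $e_v$ sits at the end of $\sigma_v$ and, being preceded by an unused edge, must itself be unused; this leaves an unused incoming edge at the parent of $v$ in $T$, and the same balance argument produces an unused outgoing edge there, so $S$ is closed under taking parents. Since $T$ is rooted at $w_0\notin S$, iterating parents eventually forces $w_0\in S$, a contradiction, so $S=\emptyset$.

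The two constructions are visibly mutual inverses: reading the Eulerian circuit produced by the greedy algorithm back off, one recovers both the chronological orderings $\sigma_v$ that were fed in and, as the last outgoing edges at non-root vertices, the arborescence $T$; in the other direction, running the greedy algorithm on the data extracted from a circuit $C$ reproduces $C$ step by step. This completes the bijection and the counting. As indicated, the genuinely substantial step is the ``no premature dead-end'' claim in the backward direction, which is where the arborescence hypothesis is actually used; the forward acyclicity and the counting are essentially bookkeeping.
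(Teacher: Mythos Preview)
The paper does not prove this theorem; it is quoted as a classical result with references to \cite{tutte1941unicursal,van1951circuits} and \cite[Theorem 5.6.2]{stanley1999enumerative}, and then used as a black box to derive equation \eqref{eq:card-C-tau-ME}. There is thus no ``paper's own proof'' to compare against.

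That said, your argument is correct and is precisely the standard bijective proof of the BEST theorem (as in Stanley's reference). The two directions are handled cleanly: in the forward map the acyclicity of the last-exit edges is argued properly (your treatment of the terminal case via $v_{(j\bmod k)+1}\neq w_0$ is the right way to close it), and in the backward map the crucial fact that the greedy walk exhausts all edges is where the arborescence constraint actually does work---your ``$S$ is closed under taking parents'' argument is exactly what is needed, and the balance identity you invoke holds at $w_0$ as well (entries equal exits there since the walk both starts and ends at $w_0$), so the parent-closure argument goes through uniformly. The inverse checks are routine, and the count $T(G,w_0)\prod_{i\in V}(\operatorname{outdeg}(i)-1)!$ drops out immediately from the description of the constraints on $\sigma$.
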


\begin{remark}
	It is a non-trivial fact that the number of oriented spanning trees $T(G, w_0)$ does not depend on the choice of the marked vertex $w_0$. We shall thus omit $w_0$ and simply write $T(G)$.
\end{remark}

In order to apply this theorem to our setting of Markov exchangeable probability distributions, we first need a ``trajectorial version'' of it, that is we only need to keep track of the order in which the vertices of a Eulerian cycle are visited. 

\begin{defi}
	An \emph{Eulerian trajectory} of a directed multigraph $G$ with a marked edge $e_0 \in E$ and a marked vertex $w_0 \in V$ is a cyclic sequence of vertices $(v_1, \ldots, v_n)$ with the properties that $v_1=w_0$ and that the number of times an ordered pair of vertices $(i,j)$ appears in the sequence $x$ as neighbouring elements (the sequence of vertices is cyclic, so $(v_n,v_1)$ are considered neighbouring) is equal to the number of oriented edges $i \to j$ in $G$.
\end{defi}

It is clear that each Eulerian trajectory corresponds bijectively to the following number of Eulerian cycles, by permuting the order in which the edges appear in the cycle:
\[\prod_{i,j \in V} |\{e \in E \, : \, e \text{ is an edge from $i$ to $j$, different than the marked edge $e_0$}\}|!\,.\]

We are now in measure to relate the results above to the notion of Markov exchangeability, as it was done in \cite{Zaman1984urn}. Recall that an equivalence class $\mathcal{C}_\tau$ for the relation of Markov exchangeability is given by a pair $\tau=(v,t)$, where $v \in V$ is a fixed vertex (the initial position) and $t=(t_{ij})_{i,j\in V}$ is a matrix counting the number of transitions from $i$ to $j$. From this data, one builds a directed multigraph $G = (V, E_t)$, with $t_{ij}$ oriented edges between vertices $i$ and $j$. Exactly one of the following two alternatives holds for the non-empty class $\mathcal C_\tau$:
\begin{enumerate}
	\item There is a unique vertex $w \in V$ with the property that $\operatorname{indeg}(w) = \operatorname{outdeg}(w) + 1$.
	\item $G$ is Eulerian. 
\end{enumerate}
Let us consider the vertex $w$ given by the former alternative and set $w=v$ in case the latter alternative holds. The letter $w$ is the last letter appearing in every word in $\mathcal C_\tau$. Note also that whenever $v \neq w$, we have $\operatorname{outdeg}(v) = \operatorname{indeg}(v) + 1$. Let us consider the modified graph $G_0 = (V, E_t \sqcup e_0)$, where $e_0$ is an extra edge $w \to v$, and mark the edge $e_0$ in $G_0$. The directed multigraph $G_0$ is Eulerian, so one can apply the BEST theorem to count the number of Eulerian cycles it has, and then derive from this its number of Eulerian trajectories. Note that Eulerian trajectories in $G_0$ correspond exactly to trajectories (in the Markov sense) in $\mathcal{C}_\tau$, so we conclude that 
\begin{equation}\label{eq:card-C-tau-ME}
|\mathcal{C}_\tau| = \frac{T(G_0)\prod_{i \in V} (\operatorname{outdeg}(i) - 1 + \mathbf 1_{i=w})!}{\prod_{i,j \in V} t_{ij}!} = t_w\, T(G_0)\, \frac{\prod_{i \in V} (t_i - 1)!}{\prod_{i,j \in V} t_{ij}!},
\end{equation}
where we recall that $t_i = \sum_{j\in V} t_{ij}  = \operatorname{outdeg}(i)$.

Let us work out on an example the computation from equation \eqref{eq:card-C-tau-ME}. Let $x=(11323122) \in \{1,2,3\}^8$ and consider $\mathcal C$ its equivalence class. The graph associated to $\mathcal C$ is depicted in the left panel of Figure \ref{fig:example-v-G}. The matrix $t$ for the class $\mathcal C$ reads
$$t = \begin{bmatrix}
1 & 1 & 1 \\
0 & 1 & 1 \\
1 & 1 & 0 
\end{bmatrix}$$
with the starting (resp.~ending) vertex $v=1$ (resp.~$w=2$). The graph $G_0$ is obtained from $G$ by adding an extra oriented edge $2 \to 1$. In $G_0$, there are three oriented spanning trees flowing towards (say) the vertex $1$. Moreover, we have $t_1=3$ and $t_2=t_3=2$. Hence, equation \eqref{eq:card-C-tau-ME} tells us that the size of $\mathcal C$ is
$$|\mathcal C| = t_2\,T(G_0)\,\frac{\prod_{i=1}^3(t_i-1)!}{\prod_{i,j=1}^3t_{i,j}!} = 2 \times 3 \times \frac{(3-1)![(2-1)!]^2}{(1!)^7(0!)^2} = 12.$$
Indeed, here are the 12 elements of the equivalence class $\mathcal C$ of $v$:
$$\begin{array}{ccccccccccccccccc}
1 & 1 & 3 & 2 & 3 & 1 & 2 & 2 & \qquad\qquad\qquad \qquad & 1 & 3 & 1 & 1 & 2 & 3 & 2 & 2 \\
1 & 1 & 3 & 2 & 2 & 3 & 1 & 2 & \qquad\qquad\qquad \qquad & 1 & 3 & 1 & 1 & 2 & 2 & 3 & 2\\
1 & 1 & 3 & 1 & 2 & 3 & 2 & 2 & \qquad\qquad\qquad \qquad & 1 & 2 & 3 & 1 & 1 & 3 & 2 & 2\\
1 & 1 & 3 & 1 & 2 & 2 & 3 & 2 & \qquad\qquad\qquad \qquad & 1 & 3 & 2 & 3 & 1 & 1 & 2 & 2\\
1 & 1 & 2 & 3 & 1 & 3 & 2 & 2 & \qquad\qquad\qquad \qquad & 1 & 2 & 2 & 3 & 1 & 1 & 3 & 2\\
1 & 1 & 2 & 2 & 3 & 1 & 3 & 2 & \qquad\qquad\qquad \qquad & 1 & 3 & 2 & 2 & 3 & 1 & 1 & 2
\end{array}$$

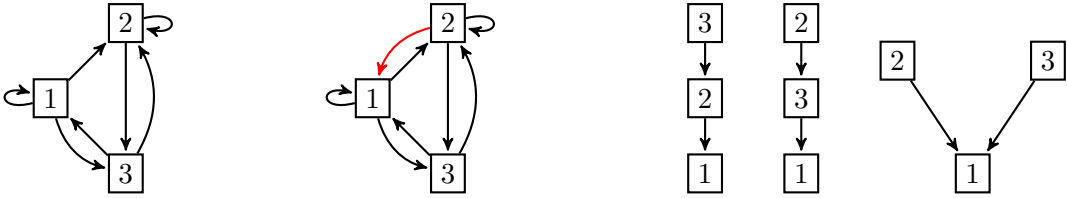
\begin{figure}[htpb]
	\begin{centering}
		\begin{tikzpicture}[->,>=stealth',shorten >=1pt,auto,node distance=3cm,
		thick,main node/.style={circle,draw,font=\sffamily\Large\bfseries}]
		
		\node[draw] at (0, 0) (1) {1};
		\node[draw] at (1,  1)  (2) {2};
		\node[draw] at (1, -1) (3) {3};
		
		\path[every node/.style={font=\sffamily\small}]
		(1) edge [loop left] node {} (1)
		edge [bend right] node[right] {} (3)
		edge [ right] node[right] {} (2)
		(2) edge node [right] {} (3)
		edge [loop right] node {} (2)
		(3) edge [bend right] node[right] {} (2)
		edge node [right] {} (1);
		\end{tikzpicture}\qquad\qquad
		\begin{tikzpicture}[->,>=stealth',shorten >=1pt,auto,node distance=3cm,
		thick,main node/.style={circle,draw,font=\sffamily\Large\bfseries}]
		
		\node[draw] at (0, 0) (1) {1};
		\node[draw] at (1,  1)  (2) {2};
		\node[draw] at (1, -1) (3) {3};
		
		\path[every node/.style={font=\sffamily\small}]
		(1) edge [loop left] node {} (1)
		edge [bend right] node[right] {} (3)
		edge [ right] node[right] {} (2)
		(2) edge [  bend right,color=red] node[right] {} (1)
		edge node [right] {} (3)
		edge [loop right] node {} (2)
		(3) edge [bend right] node[right] {} (2)
		edge node [right] {} (1);
		\end{tikzpicture}\qquad\qquad\qquad
		\begin{tikzpicture}[->,>=stealth',shorten >=1pt,auto,node distance=3cm,
		thick,main node/.style={circle,draw,font=\sffamily\Large\bfseries}]
		
		\node[draw] at (0, 0) (1) {1};
		\node[draw] at (0,  1)  (2) {2};
		\node[draw] at (0, 2) (3) {3};
		
		\path[every node/.style={font=\sffamily\small}]
		(2) edge node {} (1)
		(3) edge node {} (2);
		\end{tikzpicture}\qquad
		\begin{tikzpicture}[->,>=stealth',shorten >=1pt,auto,node distance=3cm,
		thick,main node/.style={circle,draw,font=\sffamily\Large\bfseries}]
		
		\node[draw] at (0, 0) (1) {1};
		\node[draw] at (0, 2)  (2) {2};
		\node[draw] at (0, 1) (3) {3};
		
		\path[every node/.style={font=\sffamily\small}]
		(2) edge node {} (3)
		(3) edge node {} (1);
		\end{tikzpicture}\qquad
		\begin{tikzpicture}[->,>=stealth',shorten >=1pt,auto,node distance=3cm,
		thick,main node/.style={circle,draw,font=\sffamily\Large\bfseries}]
		
		\node[draw] at (0, 0)  (1) {1};
		\node[draw] at (-1, 1.5)  (2)  {2};
		\node[draw] at (1, 1.5)  (3)  {3};
		
		\path[every node/.style={font=\sffamily\small}]
		(2) edge node {} (1)
		(3) edge node {} (1);
		\end{tikzpicture}
	\end{centering}
	\caption{Graphs $G$ and $G_0$ associated to $v=(11323122)$, as well as the three oriented trees flowing towards the vertex $1$.}
	\label{fig:example-v-G}
\end{figure}

We have now all the tools to prove the main technical result of this section, bounding the probability of an extremal Markov exchangeable probability by the probability of a Markov chain. 

\begin{lem}\label{lem:ME}
For all types $\tau=(v_1,t)$, we have
\begin{equation}\label{eq_lem_Markovexchangeability}
Q_\tau\le\alpha(n)\,\pi_\tau\,,
\end{equation}
where $\pi_\tau$ is the Markov distribution on $V$ starting in $v_1$ and transiting from $i$ to $j$ with probability $t_{i,j}/t_i$ for all $i,j=1,...,d$, and where $\alpha(n)$ is given by
\begin{equation}\label{eq:bound-alpha-ME}
\alpha(n)= \left(\frac{e^d}{\sqrt{2\pi}d^{d+1/2}}\right)^d (n-1)^{d(d+1)/2}\,.
\end{equation}
\end{lem}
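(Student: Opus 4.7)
My plan follows the template of Lemma~\ref{lem_exchangeability} for the exchangeability case: fix a sequence $v\in\mathcal{C}_\tau$, write down the two quantities $Q_\tau(v)$ and $\pi_\tau(v)$ explicitly, bound their ratio uniformly in $v$, and show that this bound matches \eqref{eq:bound-alpha-ME}.

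For $Q_\tau(v)=1/|\mathcal{C}_\tau|$, I would directly use the BEST-theorem based formula \eqref{eq:card-C-tau-ME} derived just above the statement, which gives
\[
Q_\tau(v)=\frac{\prod_{i,j\in V}t_{ij}!}{t_w\,T(G_0)\,\prod_{i\in V}(t_i-1)!}.
\]
For $\pi_\tau(v)$, by definition of the Markov chain starting in $v_1$ with transition probabilities $t_{ij}/t_i$,
\[
\pi_\tau(v)=\prod_{i,j\in V}\bigl(t_{ij}/t_i\bigr)^{t_{ij}}=\frac{\prod_{i,j\in V}t_{ij}^{t_{ij}}}{\prod_{i\in V}t_i^{t_i}}.
\]
Using $(t_i-1)!=t_i!/t_i$, the ratio reads
\[
\frac{Q_\tau(v)}{\pi_\tau(v)}=\frac{1}{t_w\,T(G_0)}\,\prod_{i\in V}\frac{t_i\cdot t_i^{t_i}}{t_i!}\,\prod_{i,j\in V}\frac{t_{ij}!}{t_{ij}^{t_{ij}}}.
\]
Since $T(G_0)\geq 1$ (the class is nonempty, so at least one oriented spanning tree exists) and $t_w\geq 1$, both of these factors can simply be dropped.

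Next I would apply Stirling's formula \eqref{eq:Stirling-approx}. The lower bound gives $t_i!\geq\sqrt{2\pi}\,t_i^{t_i+1/2}e^{-t_i}$, hence
\[
\frac{t_i\cdot t_i^{t_i}}{t_i!}\leq\frac{e^{t_i}\,t_i^{1/2}}{\sqrt{2\pi}},
\]
while the upper bound gives
\[
\frac{t_{ij}!}{t_{ij}^{t_{ij}}}\leq e\,t_{ij}^{1/2}e^{-t_{ij}},
\]
with vanishing terms treated under the convention $0^0=1$ (any factor with $t_i=0$ or $t_{ij}=0$ is simply~$1$, so can be omitted from the product). Using the fundamental identity $\sum_{i\in V}t_i=\sum_{i,j\in V}t_{ij}=n-1$, all the exponentials collapse and one gets
\[
\frac{Q_\tau(v)}{\pi_\tau(v)}\leq\frac{e^{d^{2}}}{(2\pi)^{d/2}}\,\prod_{i\in V}t_i^{1/2}\,\prod_{i,j\in V}t_{ij}^{1/2}.
\]

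Finally, I would apply the arithmetic--geometric mean inequality twice, once on the $d$ quantities $t_i$ and once on the $d^2$ quantities $t_{ij}$, to obtain
\[
\prod_{i\in V}t_i\leq\left(\frac{n-1}{d}\right)^{d}\quad\text{and}\quad\prod_{i,j\in V}t_{ij}\leq\left(\frac{n-1}{d^{2}}\right)^{d^{2}}.
\]
Plugging these in produces the factor $(n-1)^{d(d+1)/2}$ and rearranges the $d$-dependent constants into exactly
\[
\left(\frac{e^{d}}{\sqrt{2\pi}\,d^{d+1/2}}\right)^{d},
\]
matching \eqref{eq:bound-alpha-ME}.

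The main obstacle is not conceptual but bookkeeping: one must be careful that (i) the formula \eqref{eq:card-C-tau-ME} is used in the correct regime (distinguishing the two alternatives $v_1=w$ and $v_1\neq w$ is in fact harmless because the extra edge $e_0$ in $G_0$ is not counted among the $t_{ij}$'s), (ii) the Stirling bounds are applied so as to make the exponentials involving $n-1$ cancel exactly, and (iii) the two separate AM--GM applications give precisely the exponent $d(d+1)/2$. The edge cases $t_i=0$ or $t_{ij}=0$ do not cause any trouble thanks to the convention $0^0=1$ and the fact that the Stirling upper/lower bounds are not needed for those indices.
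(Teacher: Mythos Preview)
Your proof is correct and follows essentially the same route as the paper: both start from the BEST-theorem count \eqref{eq:card-C-tau-ME}, drop the factors $t_w\,T(G_0)\geq 1$, apply Stirling's bounds \eqref{eq:Stirling-approx}, and finish with the arithmetic--geometric mean inequality. The only organizational difference is that the paper recycles the exchangeable-case estimate of Lemma~\ref{lem_exchangeability} row by row (bounding $\prod_j t_{ij}!/t_i!$ for each fixed $i$, which amounts to AM--GM on each row $\prod_j t_{ij}\le(t_i/d)^d$, followed by a second AM--GM on $\prod_i t_i$), whereas you apply Stirling to all factors at once and then use AM--GM separately on the $d$ quantities $t_i$ and on the $d^2$ quantities $t_{ij}$; the two packagings yield the identical constant $e^{d^2}/((2\pi)^{d/2}d^{d^2+d/2})$.
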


\begin{proof}
	Using \eqref{eq:card-C-tau-ME}, we bound the size of the (non-empty) equivalence class $C_\tau$:
	\begin{equation}\label{eq:bound-C-tau}
	|\mathcal{C}_\tau|  \geq  \frac{\prod_{i \in V} (t_i - 1)!}{\prod_{i,j \in V} t_{ij}!}.
	\end{equation}
Now, we know from the proof of Lemma \ref{lem_exchangeability} that
\[  \frac{\prod_{j=1}^d t_{i,j}!}{t_i!} \le \frac{1}{\sqrt{2\pi}}\left(\frac{e}{d^{1/2}}\right)^{d}t_i^{(d-1)/2} \frac{\prod_{j=1}^d t_{i,j}^{t_{i,j}}}{t_i^{t_i}}\,. \]
Hence, we get, for some arbitrary $v \in \mathcal C_\tau$,
\begin{align*} Q_\tau(v)  = \frac{1}{|\mathcal C_\tau|}& \le \left(\frac{1}{\sqrt{2\pi}}\left(\frac{e}{d^{1/2}}\right)^{d}\right)^d \prod_{i=1}^d t_i^{(d+1)/2} \prod_{i=1}^d \frac{\prod_{j=1}^d t_{i,j}^{t_{i,j}}}{t_i^{t_i}} \\ 
& \le \left(\frac{e^d}{\sqrt{2\pi}d^{d+1/2}}\right)^d (n-1)^{d(d+1)/2} \prod_{i=1}^d \frac{\prod_{j=1}^d t_{i,j}^{t_{i,j}}}{t_i^{t_i}}\,, \end{align*}
where the last inequality holds because $\sum_{i=1}^d t_i=n-1$, so that by the arithmetic-geometric mean inequality, $\prod_{i=1}^d t_i\le \left((n-1)/d\right)^d$. Since for $v\in\mathcal{C}_\tau$ we also have 
\[ \pi_\tau(v)= \prod_{i=1}^d \frac{\prod_{j=1}^d t_{i,j}^{t_{i,j}}}{t_i^{t_i}}\,, \]
the proof is complete.
\end{proof}

\begin{coro} \label{cor:Markov-exchangeable}
	For any Markov exchangeable probability distribution $P \in \mathcal P(V^n)$, we have
	\[P \le \mathrm{poly}(n)  \sum_{\tau=1}^N \frac 1 N \, F(P, \pi_)^2 \, \pi_{\tau},\]
	where $\mathrm{poly}(n)$ is a polynomial in $n$ of degree $d(2d+1)-1$, $N\leq d\times \binom{n+d^2-1}{n}$ and the Markovian probability distributions $\pi_{\tau} \in \mathcal P(V^n)$ have been defined in Lemma \ref{lem:ME}.
\end{coro}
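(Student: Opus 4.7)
The proof will be a direct combination of two results already established in the paper: the general de Finetti reduction scheme of Lemma \ref{lem_constrainedDeFinettiTheorem} and the specific point-wise bound of Lemma \ref{lem:ME}. The plan is to feed the $\pi_\tau$'s and the constant $\alpha(n)$ from Lemma \ref{lem:ME} into the abstract machinery of Lemma \ref{lem_constrainedDeFinettiTheorem}, and then keep track of the degree of the resulting polynomial pre-factor.

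More precisely, fix a Markov exchangeable probability distribution $P \in \Pcal_\sim(V^n)$. By Lemma \ref{lem:ME}, for every type $\tau = (v_1, t)$ indexing a non-empty equivalence class $\Ccal_\tau$, one has $Q_\tau \le \alpha(n)\,\pi_\tau$, with the same $\alpha(n)$ for every $\tau$ (the bound in \eqref{eq:bound-alpha-ME} is type-independent). The hypothesis of Lemma \ref{lem_constrainedDeFinettiTheorem} is therefore satisfied, and we immediately get
\[ P \le N \times \alpha(n)^2 \sum_{\tau=1}^N \frac{1}{N}\, F(P, \pi_\tau)^2\, \pi_\tau, \]
which is the desired inequality, with the $\pi_\tau$'s being precisely the Markov distributions described in Lemma \ref{lem:ME}.

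It then only remains to check the claimed degree of $N \times \alpha(n)^2$ as a polynomial in $n$. From \eqref{eq:bound-alpha-ME} we read off that $\alpha(n)^2$ is polynomial in $n$ of degree $d(d+1)$. The crude count $N \le d \binom{n+d^2-1}{n} \le d\,(n+1)^{d^2-1}$ given at the start of Subsection \ref{sect:Markov} shows that $N$ is polynomial in $n$ of degree at most $d^2-1$. The product is therefore polynomial in $n$ of degree at most
\[ d(d+1) + (d^2 - 1) = 2d^2 + d - 1 = d(2d+1) - 1, \]
which matches the statement of the corollary.

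I do not expect a serious obstacle here: the combinatorial heart of the argument (the BEST-theorem based estimate on $|\Ccal_\tau|$ and the comparison with the product of transition frequencies) has already been carried out in Lemma \ref{lem:ME}, and the passage from extreme points to arbitrary elements of $\Pcal_\sim(V^n)$ is encapsulated in Lemma \ref{lem_constrainedDeFinettiTheorem}. The only step requiring minor care is the bookkeeping of the polynomial degree, which is the computation above.
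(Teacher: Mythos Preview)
Your proof is correct and is precisely the argument the paper intends: the corollary is stated without proof because it follows immediately from plugging the bound of Lemma \ref{lem:ME} into Lemma \ref{lem_constrainedDeFinettiTheorem}, exactly as you do, and your degree count $d(d+1)+(d^2-1)=d(2d+1)-1$ is the right bookkeeping.
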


\subsection[The case of l-Markov exchangeability]{The case of $\ell$-Markov exchangeability}\label{sect:kMarkov}

In this subsection we study an extension of Markov exchangeability, where we consider distributions sampled from $(\ell+1)$-tensors, the Markovian case corresponding to the case $\ell=1$. A Markovian $(\ell+1)$-tensor on $V$ is a family $(p_{i_1,..,i_{\ell}}^{i_{\ell+1}})_{i_1,...,i_{\ell+1}=1}^d$ of real numbers such that:
\begin{enumerate}
\item $p_{i_1,..,i_{\ell}}^{i_{\ell+1}}\geq0$ for all $i_1,...,i_{\ell+1}$.
\item $\sum_{j=1}^d\,{p_{i_1,..,i_{\ell}}^{j}}=1$ for all $i_1,...,i_{\ell}$.
\end{enumerate}
For $n\geq \ell+1$, this defines a probability distribution on $V^n$ in the following way. Fix $x_1,...,x_{\ell}\in V$ as initial data. Then, for all $k\geq \ell+1$,
\[\Prob[x_{k+1}=j\,|\,x_1=i_1,...,x_k=i_k]=\Prob[x_{k+1}=j\,|\,x_{k-\ell+1}=i_{k-\ell+1},...,x_k=i_k]=p_{i_{k-\ell+1},..,i_k}^{j}\,.\]

When comparing to the usual notion of Markov exchangeability, we see that the ``future'' value of the process depends on the past only through the last $\ell$ values. 

We shall compare those distributions with the ones emerging from a form of partial exchangeability defined as follows. Given $x,y\in V^n$, we say that $x\sim y$ if $t_{i_1,..,i_{\ell}}^{i_{\ell+1}}(x)=t_{i_1,..,i_{\ell}}^{i_{\ell+1}}(y)$ for all $i_1,...,i_{\ell+1}$, where
\[t_{i_1,..,i_{\ell}}^{i_{\ell+1}}(x)=|\{\text{occurrences of the sequence }i_1,..,i_{\ell+1}\text{ in }x\}|\]
Remark that
\[\sum_{i_1,...,i_{\ell+1}=1}^d\,t_{i_1,..,i_{\ell}}^{t_{\ell+1}}=n-\ell\,.\]
We write $t_{i_1,...,i_\ell}=\sum_{i_{\ell+1}=1}^d\,t_{i_1,..,i_{\ell}}^{i_{\ell+1}}$. For this notion of partial exchangeability, the type is given by a $(\ell+1)$-tuple $\tau = (v_1,...,v_{\ell},t)$, where $v_1,...,v_{\ell}\in V$ are the initial data and $t=(t_{i_1,..,i_{\ell}}^{i_{\ell+1}})_{i_1,...,i_{\ell+1}=1}^d$ is the $(\ell+1)$-tensor whose elements are defined above. We can prove the following:

\begin{lem}\label{lem:kME}
For all types $\tau=(v_1,...,v_{\ell},t)$, we have
\begin{equation*}
Q_\tau\le\alpha(n)\,\pi_\tau,
\end{equation*}
where $\pi_\tau$ is the $\ell$-Markovian distribution on $V$ starting at $v_1,...,v_{\ell}$ and with transition $(\ell+1)$-tensor given by
\begin{equation}\label{eq:Markov-kernel-kME}
p_{i_1,..,i_{\ell}}^{i_{\ell+1}}=\frac{t_{i_1,...,i_{\ell}}^{i_{\ell+1}}}{t_{i_1,...,i_{\ell}}},\ i_1,...,i_{\ell+1}=1,\ldots,d,
\end{equation}
and where $\alpha(n)$ is given by
\begin{equation}\label{eq:bound-alpha-kME}
\alpha(n) = \left( \frac{e^d}{\sqrt{2\pi}d^{((\ell+1)d+\ell)/2}}\right)^{d^\ell} (n-\ell)^{d^\ell(d+1)/2}.
\end{equation}
\end{lem}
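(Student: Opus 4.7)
The plan is to mimic the argument of Lemma \ref{lem:ME}, with the key conceptual move of ``flattening'' the $\ell$-Markov chain into an ordinary Markov chain on an enlarged state space, so that the BEST theorem applies directly. Concretely, I would associate to a type $\tau=(v_1,\ldots,v_\ell,t)$ the directed multigraph $G=(V^\ell,E_t)$ (a de Bruijn-like graph) whose vertices are the $\ell$-tuples $\vec{i}=(i_1,\ldots,i_\ell)\in V^\ell$, and which contains $t_{i_1,\ldots,i_\ell}^{i_{\ell+1}}$ parallel edges from $(i_1,\ldots,i_\ell)$ to $(i_2,\ldots,i_\ell,i_{\ell+1})$. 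A sequence $v=(v_1,\ldots,v_n)\in V^n$ lying in the class $\Ccal_\tau$ is in bijective correspondence with an Eulerian trajectory in $G$ starting at the vertex $(v_1,\ldots,v_\ell)$: the successive ``states'' visited are $w_k=(v_k,\ldots,v_{k+\ell-1})$ for $k=1,\ldots,n-\ell+1$, and the total number of edges $n-\ell$ matches $\sum_{\vec{i},j}t_{\vec{i}}^{j}$.

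Next I would add, exactly as in Subsection \ref{sect:Markov}, a single marked completion edge $e_0$ from the terminal state $w_{n-\ell+1}$ to the initial state $(v_1,\ldots,v_\ell)$, obtaining an Eulerian multigraph $G_0$. Applying the BEST theorem to $G_0$ and converting Eulerian cycles into Eulerian trajectories gives the exact count
\[
|\Ccal_\tau|=t_{w_0}\,T(G_0)\,\frac{\prod_{\vec{i}\in V^\ell}(t_{\vec{i}}-1)!}{\prod_{\vec{i}\in V^\ell,j\in V}t_{\vec{i}}^{j}!},
\]
where $w_0$ is chosen as in the $\ell=1$ case and the product is restricted to visited vertices. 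Dropping the positive factor $t_{w_0}T(G_0)\ge 1$ yields the lower bound analogous to \eqref{eq:bound-C-tau}.

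The analytic step is then a vertex-wise application of the exchangeability Stirling bound established inside the proof of Lemma \ref{lem_exchangeability}. For each $\vec{i}\in V^\ell$ we write
\[
\frac{\prod_{j}t_{\vec{i}}^{j}!}{(t_{\vec{i}}-1)!}=t_{\vec{i}}\cdot\frac{\prod_{j}t_{\vec{i}}^{j}!}{t_{\vec{i}}!}\le t_{\vec{i}}\cdot\frac{1}{\sqrt{2\pi}}\Bigl(\frac{e}{d^{1/2}}\Bigr)^{d}t_{\vec{i}}^{(d-1)/2}\frac{\prod_{j}(t_{\vec{i}}^{j})^{t_{\vec{i}}^{j}}}{t_{\vec{i}}^{t_{\vec{i}}}}.
\]
Taking the product over $\vec{i}\in V^\ell$, the rightmost factor telescopes into $\pi_\tau(v)$ (by the very definition \eqref{eq:Markov-kernel-kME} of $\pi_\tau$ and the fact that $\sum_{j}t_{\vec{i}}^{j}=t_{\vec{i}}$), while the constants accumulate to $\bigl(e^d/\sqrt{2\pi}d^{d/2}\bigr)^{d^\ell}$. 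Hence $Q_\tau(v)/\pi_\tau(v)$ is bounded by this constant times $\prod_{\vec{i}}t_{\vec{i}}^{(d+1)/2}$.

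To finish, I invoke the AM--GM inequality on $\sum_{\vec{i}\in V^\ell}t_{\vec{i}}=n-\ell$ with $d^\ell$ summands, obtaining $\prod_{\vec{i}}t_{\vec{i}}\le\bigl((n-\ell)/d^\ell\bigr)^{d^\ell}$, and therefore
\[
\prod_{\vec{i}}t_{\vec{i}}^{(d+1)/2}\le\frac{(n-\ell)^{d^\ell(d+1)/2}}{d^{\ell d^\ell(d+1)/2}},
\]
which combines with the previous constant into the announced value \eqref{eq:bound-alpha-kME} of $\alpha(n)$. The main obstacle I expect is the bookkeeping around the BEST argument: one must carefully justify the Eulerian-trajectory correspondence on the de Bruijn graph, correctly identify the marked vertex and edge when the initial and terminal $\ell$-states coincide (so that $G$ is already Eulerian), and restrict the products over $\vec{i}$ to genuinely visited states so that terms $(t_{\vec{i}}-1)!$ with $t_{\vec{i}}=0$ never appear. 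Once this combinatorial scaffolding is in place, the analytic part follows almost verbatim from the $\ell=1$ proof.
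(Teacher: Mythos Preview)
Your proposal is correct and follows essentially the same route as the paper: the paper also passes to the de Bruijn-type graph on $V^\ell$, identifies elements of $\Ccal_\tau$ with Eulerian trajectories, invokes the BEST argument to obtain the lower bound $|\Ccal_\tau|\ge \prod_{\vec{i}}(t_{\vec{i}}-1)!/\prod_{\vec{i},j}t_{\vec{i}}^{j}!$, and then explicitly defers the analytic conclusion to the reader as ``exactly the same as the Markov case''. Your write-up in fact supplies those deferred details (the vertex-wise Stirling bound, the AM--GM step on $\sum_{\vec{i}}t_{\vec{i}}=n-\ell$, and the combination of the $d^{d/2}$ and $d^{\ell(d+1)/2}$ powers into $d^{((\ell+1)d+\ell)/2}$), and your caveat about restricting products to visited states is a legitimate bookkeeping point that the paper leaves implicit.
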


\begin{proof}
The proof is an extension of the one for Markov exchangeability, which corresponds to the case $\ell=1$. In particular, the result will follow from an application of the BEST Theorem. We want to lower bound the cardinal of the equivalence classes $\Ccal_\tau$ for a fixed type $\tau$. To this end, we explicit a one-to-one correspondence between this set and the Eulerian trajectories of some directed multigraph. Consider the graph $G=(V_1\times\cdots\times V_{\ell},E)$ defined by the following properties:
\begin{enumerate}
\item An oriented edge can possibly exist from $(i_1,...,i_{\ell})$ to $(j_1,...,j_{\ell})$ only if $(i_2,...,i_{\ell})=(j_1,...,j_{\ell-1})$.
\item The number of edges $(i_1, \ldots, i_{\ell}) \to (i_2, \ldots, i_{\ell+1})$ is given by $t_{i_1,..,i_{\ell}}^{i_{\ell+1}}$.
\end{enumerate}
As in the proof of Lemma \ref{lem:ME}, we see that the number of Eulerian trajectories starting at the vertex $(v_1,...,v_\ell)$ for this graph is exactly the cardinal of $\Ccal_\tau$. Remark also that the probability distribution $\pi_\tau$ is the one obtained from the simple walk on $G$ starting at $(v_1,...,v_d)$. Then, following the same argument as in the Markov case, we obtain that
$$|\mathcal{C}_\tau|  \geq  \frac{\prod_{i \in V} (t_{i_1, \ldots, i_{\ell}} - 1)!}{\prod_{i_1, \ldots, i_{\ell+1} \in V} t_{i_1, \ldots, i_{\ell}}^{i_{\ell+1}}!}.$$
The end of the proof is exactly the same as the one of the Markov case: we bound the ratio between the lower bound in the previous equation and the probability coming from the Markov chain formula using the $(\ell+1)$-tensor in \eqref{eq:Markov-kernel-kME}. We leave the details to the reader.
\end{proof}

\begin{remark}
The bound from \eqref{eq:bound-alpha-kME} obviously reduces to \eqref{eq:bound-alpha-ME} in the case of the usual Markov exchangeability, when $\ell=1$. Notice also that \eqref{eq:bound-alpha-kME} resembles the bound for exchangeability from \eqref{eq:bound-alpha-E}, when $\ell=0$. Actually, the only difference between the two expressions for $\alpha(n)$ is a factor of $n$, which comes from the fact that in the general case of $\ell$-Markov exchangeability we can only lower bound the cardinality of the equivalence classes $\mathcal C_\tau$ (see equation \eqref{eq:bound-C-tau}). In the exchangeable situation, the cardinality of an equivalence class is exactly a binomial coefficient (see equation \eqref{eq:bound-C-t}). We lose a factor of $n$ between the lower bound \eqref{eq:bound-C-tau} and the exact expression \eqref{eq:bound-C-t}.
\end{remark}

\begin{coro} \label{cor:lMarkov-exchangeable}
	For any $\ell$-Markov exchangeable probability distribution $P \in \mathcal P(V^n)$, we have
	\[P \le \mathrm{poly}(n)  \sum_{\tau=1}^N \frac 1 N \, F(P, \pi_{\tau})^2 \, \pi_{\tau},\]
	where $\mathrm{poly}(n)$ is a polynomial in $n$ of degree $d^{\ell}(2d+1)-1$, $N\leq d^{\ell}\times \binom{n+d^{\ell+1}-\ell}{n-\ell+1}$ and the $\ell$-Markovian probability distributions $\pi_{\tau} \in \mathcal P(V^n)$ have been defined in Lemma \ref{lem:kME}.
\end{coro}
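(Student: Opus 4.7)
The plan is to combine the general framework of Lemma~\ref{lem_constrainedDeFinettiTheorem} with the pointwise estimate established in Lemma~\ref{lem:kME}. Concretely, for every $\ell$-Markov equivalence class indexed by a type $\tau = (v_1,\ldots,v_\ell,t)$, Lemma~\ref{lem:kME} supplies an $\ell$-Markov distribution $\pi_\tau \in \Pcal_\sim(V^n)$ satisfying the pointwise inequality $Q_\tau \leq \alpha(n)\,\pi_\tau$ with $\alpha(n)$ given in \eqref{eq:bound-alpha-kME}. Since each $\pi_\tau$ is itself $\ell$-Markov exchangeable, Lemma~\ref{lem_constrainedDeFinettiTheorem} applies verbatim and produces the desired reduction
\[ P \leq N\,\alpha(n)^2 \sum_{\tau=1}^N \frac{1}{N}\,F(P,\pi_\tau)^2\,\pi_\tau \]
for any $\ell$-Markov exchangeable $P$. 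Everything after this is bookkeeping: bound $N$ and collect the polynomial degrees.

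To bound $N$, I would count types as follows: an $\ell$-Markov type is specified by the initial word $(v_1,\ldots,v_\ell)\in V^\ell$, contributing at most $d^\ell$ choices, together with a transition tensor $t = (t_{i_1,\ldots,i_\ell}^{i_{\ell+1}})$ whose $d^{\ell+1}$ non-negative integer entries sum to $n-\ell$. A standard stars-and-bars argument bounds the number of such tensors by $\binom{n+d^{\ell+1}-\ell}{n-\ell+1}$, which yields the stated upper bound on $N$ and shows that $N$ is polynomial in $n$ of degree $d^{\ell+1}-1$.

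For the overall polynomial degree, I would then combine this count with the bound on $\alpha(n)^2$ read off from \eqref{eq:bound-alpha-kME}, which is polynomial in $n$ of degree $d^\ell(d+1)$. The degree of the product $N\,\alpha(n)^2$ is therefore
\[ (d^{\ell+1}-1) + d^\ell(d+1) \;=\; 2d^{\ell+1} + d^\ell - 1 \;=\; d^\ell(2d+1)-1, \]
matching the statement.

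The corollary is essentially a direct assembly of the general reduction lemma and the class-specific comparison estimate, so I do not foresee any conceptual obstacle: the only real work has already been done in Lemma~\ref{lem:kME} (the BEST-theorem-based lower bound on $|\Ccal_\tau|$ and its comparison to the $\ell$-Markov weight), and what is left here is purely arithmetic bookkeeping to track how the class count $N$ and the prefactor $\alpha(n)^2$ combine into a single polynomial pre-factor of the announced degree.
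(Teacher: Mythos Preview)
Your proposal is correct and matches the paper's intended argument: the corollary is stated there without proof, as an immediate consequence of plugging Lemma~\ref{lem:kME} into the general reduction Lemma~\ref{lem_constrainedDeFinettiTheorem}, together with the stars-and-bars count of types. Your degree bookkeeping $(d^{\ell+1}-1)+d^\ell(d+1)=d^\ell(2d+1)-1$ is exactly the computation the paper leaves implicit.
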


\subsection{The case of double exchangeability}\label{sect:double}

The general setting of partially exchangeable distributions also allows to study finer forms of exchangeability when more is known on the structure of the space $V$. We treat in this section the case where $V=V_1\times V_2$ for two finite sets $V_1$ and $V_2$, but the results can be easily generalized to more than two parties. Also, we focus here on the notions of double exchangeability and double Markov exchangeability, but one can come up with many potential generalizations: for instance double $\ell$-Markov exchangeability, or exchangeability on $V_1$ and Markov exchangeability on $V_2$ etc. We leave these extensions as exercises.

Recall the definition of the Cartesian product of two equivalence relations: if $R_i$ is an equivalence relation on $V_i$, $i=1,2$, the Cartesian product $R := R_1 \times R_2$ is an equivalence relation on $V_1 \times V_2$ given by
$$(x, y)R(x',y') \iff x R_1 x' \text{ and } y R_2 y'.$$

We have the following result, showing that de Finetti reductions are stable by Cartesian products. 

\begin{prop}
For $i=1,2$, consider a sequence $R_i(n)$ of equivalence realtions on $\mathcal P(V_i^n)$ together with a sequence of distinguished subsets $\Pi_i(n) \subseteq \mathcal P_{R_i(n)}(V_i^n)$ of $R_i(n)$--exchangeable probability distributions, such that the pair $(R_i,\Pi_i)$ admits a de Finetti reduction with polynomial $\beta_i(n)$. Then the pair $(R_1 \times R_2, \Pi_1 \otimes \Pi_2)$ admits a de Finetti reduction with polynomial pre-factor $\beta(n) = \beta_1(n) \beta_2(n)$. 
\end{prop}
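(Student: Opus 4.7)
The plan is to derive the product reduction by applying the two given reductions in sequence, peeling off the factors one at a time. Identifying $(V_1\times V_2)^n$ with $V_1^n\times V_2^n$, I would write $P\in\Pcal_{R_1\times R_2}$ as $P(x,y)$ with $x\in V_1^n$ and $y\in V_2^n$, and decompose it as $P(x,y)=P_2(y)\,P_{1|y}(x)$, where $P_2(y)=\sum_x P(x,y)$ and $P_{1|y}(x)=P(x,y)/P_2(y)$.

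First, I would observe that $P_2$ is $R_2$--exchangeable and that, for every $y$ with $P_2(y)>0$, the conditional $P_{1|y}$ is $R_1$--exchangeable (since $x\sim_{R_1} x'$ forces $(x,y)\sim_{R_1\times R_2}(x',y)$). Applying the flexible reduction for $(R_1,\Pi_1)$ to $P_{1|y}$ and multiplying by $P_2(y)$ then yields
\[
P(x,y)\le\beta_1(n)\int_{\pi_1\in\Pi_1} h_{\pi_1}(y)\,\pi_1(x)\,d\nu_1(\pi_1),\qquad h_{\pi_1}(y):=\left(\sum_{x'}\sqrt{P(x',y)\pi_1(x')}\right)^{\!\!2},
\]
thanks to the ``unnormalizing'' identity $P_2(y)F(P_{1|y},\pi_1)^2=h_{\pi_1}(y)$, which is immediate from $\sqrt{P_{1|y}(x)}=\sqrt{P(x,y)}/\sqrt{P_2(y)}$.

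Next I would feed $h_{\pi_1}$ into the second reduction. It is $R_2$--invariant (inherited from the invariance of $P$), nonnegative, and of total mass $Z_{\pi_1}:=\sum_y h_{\pi_1}(y)\le 1$ (since $h_{\pi_1}\le P_2$ pointwise). So, whenever $Z_{\pi_1}>0$, $h_{\pi_1}/Z_{\pi_1}$ belongs to $\Pcal_{R_2}(V_2^n)$ and the flexible reduction for $(R_2,\Pi_2)$ applies. The same unnormalizing trick --- this time using $\sqrt{h_{\pi_1}(y)}=\sum_x\sqrt{P(x,y)\pi_1(x)}$ --- collapses the resulting fidelity factor into the joint one:
\[
Z_{\pi_1}F(h_{\pi_1}/Z_{\pi_1},\pi_2)^2=\left(\sum_y\sqrt{h_{\pi_1}(y)\pi_2(y)}\right)^{\!\!2}=F(P,\pi_1\otimes\pi_2)^2.
\]

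Chaining the two inequalities and using Fubini (the degenerate cases $P_2(y)=0$ or $Z_{\pi_1}=0$ being trivial, both sides vanishing) would then produce
\[
P(x,y)\le\beta_1(n)\beta_2(n)\int\!\!\int F(P,\pi_1\otimes\pi_2)^2\,\pi_1(x)\pi_2(y)\,d\nu_1(\pi_1)\,d\nu_2(\pi_2),
\]
which is precisely a flexible de Finetti reduction for $(R_1\times R_2,\Pi_1\otimes\Pi_2)$ with pre-factor $\beta_1(n)\beta_2(n)$ and product measure $\nu_1\otimes\nu_2$. The only delicate step will be the two algebraic fidelity identities above; the exchangeability of the marginal and conditional, the factorization of the equivalence classes of $R_1\times R_2$ as products, and the factorization of the target measure are routine. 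A mild subtlety worth flagging is that I am implicitly using the fact that the measures $\nu_1,\nu_2$ provided by the hypotheses do not depend on the input distribution --- which is indeed the case for all reductions constructed via Lemma \ref{lem_constrainedDeFinettiTheorem}.
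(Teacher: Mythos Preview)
Your proof is correct, and the two ``unnormalizing'' fidelity identities you isolate are exactly right: they are what makes the chaining work without losing the fidelity weight. Your flag about $\nu_1,\nu_2$ needing to be input-independent is also well-placed; strictly speaking, Definition~\ref{def:deF-reduction} does not demand this, so your argument is implicitly assuming the slightly stronger hypothesis that holds in every concrete case (uniform measure over a fixed finite family, as in Lemma~\ref{lem_constrainedDeFinettiTheorem}).

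Your route, however, is genuinely different from the paper's. The paper argues structurally: it observes that the equivalence classes of $R_1\times R_2$ are exactly the Cartesian products $\Ccal_i\times\Dcal_j$, so the extreme points of $\Pcal_{R_1\times R_2}$ factor as $Q_i^{(1)}\otimes Q_j^{(2)}$. One then applies the two given reductions to $Q_i^{(1)}$ and $Q_j^{(2)}$ separately, multiplies them (using multiplicativity of fidelity on tensor products), and extends to arbitrary $P$ via the simplex decomposition $P=\sum\mu_{ij}Q_i^{(1)}\otimes Q_j^{(2)}$ together with the inequality $\sum_{ij}\mu_{ij}F(Q_{ij},\pi)^2\le F(P,\pi)^2$, which follows from the disjoint-support expansion $F(P,\pi)=\sum_{ij}\sqrt{\mu_{ij}}\,F(Q_{ij},\pi)$.

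Your approach avoids identifying the extreme points altogether and instead peels off the factors via the conditional decomposition $P(x,y)=P_2(y)P_{1|y}(x)$. This is more computational but also more self-contained: it works directly at the level of Definition~\ref{def:deF-reduction} without invoking Theorem~\ref{theo_dFextremepoints} or the tensor structure of the extremal distributions. The paper's argument is shorter and more conceptual, but tacitly relies on the same input-independence of the $\nu_i$ that you make explicit.
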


\begin{proof}
First, we claim that the extremal points of $R_1 \times R_2$--exchangeable distributions are precisely the uniform distributions on $(\mathcal C_i \times \mathcal D_j)_{(i,j) \in I \times J}$, where $(\mathcal C_i)_{i \in I}$, resp.~$(\mathcal D_j)_{j \in J}$, are the equivalence classes on $V_1^{\times n}$ induced by $R_1(n)$, resp.~the equivalence classes on $V_2^{\times n}$ induced by $R_2(n)$. 	This claim follows directly from the definition of the Cartesian product of equivalence relations: indeed, $(\mathcal C_i \times \mathcal D_j)_{(i,j) \in I \times J}$ are exactly the equivalence classes of $R_1(n) \times R_2(n)$. The result follows now from multiplying the de Finetti reduction estimates \eqref{eq:def-deF-reduction} for the uniform probability distributions on the classes $\mathcal C_i$ and $\mathcal D_j$.
\end{proof}

Before looking at the incarnations of the above general statement in the cases of exchangeability and Markov exchangeability, let us consider a simple example. Let $V_1 = V_2 = \{1,2\}$ and $n=3$. Consider the equivalence relations $R_1 = R_2$ corresponding to the usual exchangeability on $V_1^3$ and $V_2^3$. Then, the sequences 
$$v^1 = ((1,1), (2,1), (2,2)) \quad \text{ and } \quad v^2 = ((1,2), (2,1), (2,1))$$
are in the same equivalence class for the Cartesian product equivalence relation. Indeed, they have the same type $t = (t^1,t^2)$, where $t^1_1=1$, $t^1_2 = 2$ and $t^2_1=2$, $t^2_2=1$. The sequences $v^1$ and $v^2$ are however not equivalent with respect to the \emph{global exchangeable} equivalence relation on $\tilde V = \{11,12,21,22\}$. It is important in the following corollaries to make the distinction between these two equivalence relations on $V = V_1 \times V_2$.

\begin{coro}\label{theo:doubly_exch}
For all types $t = \left(t^{1},t^{2}\right)$, we have
\begin{equation}\label{eq1_lem_doubleexchangeability}
Q_t\le\alpha(n)\,\pi_{t^{1}}^{\otimes n}\otimes\pi_{t^{2}}^{\otimes n},
\end{equation}
where $\pi_{t^{r}}$ is the probability distribution on $V_{r}$ such that $\pi_{t^{r}}(i)=t^{r}_i/n$ for all $i\in V_{r}$, $r=1,2$, and where $\alpha(n)$ is given by
\begin{equation}\label{eq:bound-alpha-doublyE}
\alpha(n)=\frac{1}{2\pi}\left(\frac{e}{d_1^{1/2}}\right)^{d_1} \left(\frac{e}{d_2^{1/2}}\right)^{d_2} n^{(d_1+d_2-2)/2}.
\end{equation}
\end{coro}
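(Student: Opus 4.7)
The plan is to deduce this corollary essentially for free from the stability of de Finetti reductions under Cartesian products (the proposition stated just above) combined with the single-variable exchangeable bound of Lemma \ref{lem_exchangeability}. The key observation is that double exchangeability on $V=V_1\times V_2$ is exactly the Cartesian product of the exchangeability relation on $V_1^n$ with the exchangeability relation on $V_2^n$: a sequence $((a_1,b_1),\ldots,(a_n,b_n))$ has type $(t^1,t^2)$ if and only if $(a_1,\ldots,a_n)$ has exchangeability type $t^1$ on $V_1^n$ and $(b_1,\ldots,b_n)$ has exchangeability type $t^2$ on $V_2^n$ (independently of one another, in contrast to global exchangeability on the product alphabet, as emphasized by the example immediately preceding the corollary).

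From this, I would first argue that the equivalence class $\Ccal_t$ factors as $\Ccal_{t^1}\times\Ccal_{t^2}$, and hence the uniform distribution $Q_t$ factors as a tensor product:
\[ Q_t \;=\; Q_{t^1}\otimes Q_{t^2}, \]
where $Q_{t^r}$ is the uniform probability distribution on the exchangeable class $\Ccal_{t^r}\subseteq V_r^n$. This is just the fact that the uniform distribution on a product of finite sets is the product of the uniform distributions.

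Next I would invoke Lemma \ref{lem_exchangeability} independently on each factor: for $r=1,2$, writing $d_r=|V_r|$, one has
\[ Q_{t^r} \;\le\; \alpha_r(n)\,\pi_{t^r}^{\otimes n}, \qquad \alpha_r(n) \;=\; \frac{1}{\sqrt{2\pi}}\Bigl(\frac{e}{d_r^{1/2}}\Bigr)^{d_r} n^{(d_r-1)/2}. \]
Since point-wise inequalities are preserved under tensor products of non-negative functions, combining the two estimates yields
\[ Q_t \;=\; Q_{t^1}\otimes Q_{t^2} \;\le\; \alpha_1(n)\alpha_2(n)\,\pi_{t^1}^{\otimes n}\otimes \pi_{t^2}^{\otimes n}, \]
and a direct multiplication of the two pre-factors produces exactly the $\alpha(n)$ in \eqref{eq:bound-alpha-doublyE}, with the $(2\pi)^{-1}$ coming from $(2\pi)^{-1/2}\cdot(2\pi)^{-1/2}$ and the exponent of $n$ being $(d_1-1)/2+(d_2-1)/2=(d_1+d_2-2)/2$.

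There is essentially no obstacle here beyond carefully identifying the right product structure. The only point worth double-checking is that one is working with the Cartesian product of exchangeabilities rather than exchangeability on the product alphabet $V=V_1\times V_2$, a distinction already highlighted in the excerpt via the example with $v^1$ and $v^2$; once this is clear, the result is a one-line consequence of Lemma \ref{lem_exchangeability} applied twice together with the preceding stability proposition.
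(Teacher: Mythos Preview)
Your proposal is correct and follows exactly the approach the paper intends: the corollary is an immediate consequence of the stability proposition together with Lemma \ref{lem_exchangeability}, via the factorization $Q_t=Q_{t^1}\otimes Q_{t^2}$ and multiplication of the two single-alphabet bounds. There is nothing to add.
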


\begin{coro} \label{cor:doubly-exchangeable}
	For any doubly exchangeable probability distribution $P \in \mathcal P(V^n)$, we have
	$$P \le \mathrm{poly}(n) \sum_{\substack{1\leq t^1\leq N_1 \\ 1\leq t^2\leq N_2}} \frac{1}{N_1N_2} \, F(P, \pi_{t^1}^{\otimes n}\otimes\pi_{t^2}^{\otimes n})^2 \, \pi_{t^1}^{\otimes n}\otimes\pi_{t^2}^{\otimes n},$$
	where $\mathrm{poly}(n)$ is a polynomial in $n$ of degree $2(d_1+d_2-2)$, $N_r=\binom{n+d_r-1}{n}$, $r=1,2$, and the probability distributions $\pi_{t^r} \in \mathcal P(V_r)$, $r=1,2$, have been defined in Corollary \ref{theo:doubly_exch}.
\end{coro}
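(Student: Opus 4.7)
The plan is to apply the general machinery of Lemma \ref{lem_constrainedDeFinettiTheorem} in the setting of double exchangeability, using the extremal bound already established in Corollary \ref{theo:doubly_exch}. Everything combinatorial has been done, so the argument will essentially amount to bookkeeping of the polynomial degrees.

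First, I would identify the extreme points of $\mathcal{P}_\sim(V^n)$ for doubly exchangeable distributions. By the stability result for Cartesian products proved just before Corollary \ref{theo:doubly_exch}, the equivalence classes for the Cartesian product relation are exactly the products $\mathcal{C}_{t^1}\times\mathcal{C}_{t^2}$, where $\mathcal{C}_{t^r}$ are the exchangeable classes on $V_r^n$, indexed by types $t^r$. Hence the extreme points $Q_t$ of $\mathcal{P}_\sim(V^n)$ are indexed by pairs $t=(t^1,t^2)$, and their total number is $N=N_1N_2=\binom{n+d_1-1}{n}\binom{n+d_2-1}{n}$, a polynomial in $n$ of degree $d_1+d_2-2$.

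Next, I would feed into Lemma \ref{lem_constrainedDeFinettiTheorem} the pointwise bound provided by Corollary \ref{theo:doubly_exch}, namely $Q_t\le\alpha(n)\,\pi_{t^1}^{\otimes n}\otimes\pi_{t^2}^{\otimes n}$, where $\alpha(n)$ has degree $(d_1+d_2-2)/2$ in $n$ according to \eqref{eq:bound-alpha-doublyE}. The probability distributions $\pi_{t^1}^{\otimes n}\otimes\pi_{t^2}^{\otimes n}$ clearly belong to $\mathcal{P}_\sim(V^n)$, as their marginals on $V_r^n$ are themselves i.i.d.~and hence exchangeable. Lemma \ref{lem_constrainedDeFinettiTheorem} then yields directly
$$P\le N\,\alpha(n)^2\sum_{t^1,t^2}\frac{1}{N_1N_2}\,F\bigl(P,\pi_{t^1}^{\otimes n}\otimes\pi_{t^2}^{\otimes n}\bigr)^2\,\pi_{t^1}^{\otimes n}\otimes\pi_{t^2}^{\otimes n}.$$
The prefactor has degree $(d_1+d_2-2)+2\cdot(d_1+d_2-2)/2=2(d_1+d_2-2)$ in $n$, matching the claimed bound.

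I do not expect any real obstacle in carrying out this plan: the nontrivial content (both the structure of the equivalence classes and the pointwise inequality with prefactor $\alpha(n)$) is already contained in the preceding proposition and in Corollary \ref{theo:doubly_exch}. The only subtlety worth checking is the identification of extreme points, which follows from the fact that the $Q_t = Q_{t^1}\otimes Q_{t^2}$ are mutually disjointly supported (as their supports are the disjoint products $\mathcal{C}_{t^1}\times\mathcal{C}_{t^2}$), so that the argument of Lemma \ref{lem_constrainedDeFinettiTheorem} applies verbatim with the tensorized extremal measures.
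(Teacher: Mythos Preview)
Your proposal is correct and is essentially the paper's own argument: the paper derives Corollary \ref{cor:doubly-exchangeable} directly from the stability Proposition (identifying the extremal doubly exchangeable distributions as products $Q_{t^1}\otimes Q_{t^2}$) together with the product bound of Corollary \ref{theo:doubly_exch}, which amounts exactly to feeding that bound into Lemma \ref{lem_constrainedDeFinettiTheorem} as you do. Your degree count $N\,\alpha(n)^2$ of order $(d_1+d_2-2)+2\cdot(d_1+d_2-2)/2=2(d_1+d_2-2)$ is correct and matches the stated result.
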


We can similarly define the notion of double Markov exchangeability. This time the types are given by $\tau=(\tau^{1},\tau^{2})$, where $\tau^{i}$ is a type for Markov exchangeability on $V_i$, $i=1,2$.

\begin{coro}\label{theo:doubly_Markov_exch}
For all types $\tau=\left(\tau^{1}=(v^{1}_1,t^{1}),\tau^{2}=(v^{2}_1,t^{2})\right)$, we have
\begin{equation}\label{eq_lem_doublyMarkovexchangeability}
Q_\tau\le\alpha(n)\,\pi_{\tau^{1}}\otimes\pi_{\tau^{2}},
\end{equation}
where $\pi_{\tau^{r}}$ is the Markov distribution on $V_r$ starting in $v^{r}_1$ and transiting from $i$ to $j$ with probability $t_{i,j}^{r}/t_i^{r}$ for all $i,j\in V_r$, $r=1,2$, and where $\alpha(n)$ is given by
\begin{equation}\label{eq:bound-alpha-doublyME}
\alpha(n)= \left(\frac{1}{2\pi}\right)^{(d_1+d_2)/2} \left(\frac{e^{d_1}}{d_1^{d_1+1/2}}\right)^{d_1} \left(\frac{e^{d_2}}{d_2^{d_2+1/2}}\right)^{d_2}  (n-1)^{(d_1(d_1+1)+d_2(d_2+1))/2}.
\end{equation}
\end{coro}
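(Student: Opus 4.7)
The plan is to derive this corollary as an immediate tensor-product consequence of Lemma \ref{lem:ME}, exploiting the fact that the equivalence relation under consideration is precisely the Cartesian product of two Markov exchangeability relations, one on $V_1^n$ and one on $V_2^n$. The general stability principle stated in the Proposition preceding Corollary \ref{theo:doubly_exch} tells us that the equivalence classes for such a product relation factor as $\mathcal{C}_{\tau^1}\times\mathcal{D}_{\tau^2}$, where $\mathcal{C}_{\tau^1}$, $\mathcal{D}_{\tau^2}$ are the respective Markov exchangeability classes associated to the types $\tau^1$, $\tau^2$. In particular, the uniform distribution on such a product class factorizes pointwise as $Q_\tau = Q_{\tau^1}\otimes Q_{\tau^2}$.

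First, I would apply Lemma \ref{lem:ME} separately to each factor, which yields pointwise inequalities
\[ Q_{\tau^r}\le \alpha_r(n)\,\pi_{\tau^r}, \quad\text{with}\quad \alpha_r(n) = \left(\frac{e^{d_r}}{\sqrt{2\pi}\,d_r^{d_r+1/2}}\right)^{d_r} (n-1)^{d_r(d_r+1)/2}, \]
for $r=1,2$, where $\pi_{\tau^r}$ is the Markov distribution on $V_r^n$ specified in the statement. Since tensoring two pointwise inequalities between nonnegative functions preserves the inequality, the factorization $Q_\tau = Q_{\tau^1}\otimes Q_{\tau^2}$ then gives
\[ Q_\tau \le \alpha_1(n)\,\alpha_2(n)\,\pi_{\tau^1}\otimes\pi_{\tau^2}. \]

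Finally, a direct regrouping of constants in the product $\alpha_1(n)\alpha_2(n)$, namely separating the two factors of $1/\sqrt{2\pi}$ raised to $d_1$ and $d_2$ into $(2\pi)^{-(d_1+d_2)/2}$ and adding exponents on $(n-1)$, recovers exactly the claimed expression \eqref{eq:bound-alpha-doublyME}. The only conceptual point to be careful about is the one already emphasized in the paragraph preceding Corollary \ref{theo:doubly_exch}: the relation on $V_1\times V_2$ here is the \emph{Cartesian product} of the two Markov exchangeability relations, and not the global Markov exchangeability on the joint alphabet; this is precisely what justifies the class factorization, and hence the clean tensor-product argument. No genuine obstacle arises beyond this observation and the routine arithmetic verification of the pre-factor.
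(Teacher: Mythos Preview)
Your proof is correct and follows exactly the approach the paper intends: the corollary is an immediate instance of the stability Proposition (equivalence classes for the Cartesian product factor as $\mathcal{C}_{\tau^1}\times\mathcal{D}_{\tau^2}$, hence $Q_\tau=Q_{\tau^1}\otimes Q_{\tau^2}$), and one simply multiplies the two bounds from Lemma~\ref{lem:ME}. Your identification of the key subtlety---that the relevant relation is the Cartesian product of the two Markov exchangeability relations rather than global Markov exchangeability on $V_1\times V_2$---and your verification that $\alpha_1(n)\alpha_2(n)$ equals the stated $\alpha(n)$ are both correct.
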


\begin{coro} \label{cor:doubly-Markov-exchangeable}
	For any doubly Markov exchangeable probability distribution $P \in \mathcal P(V^n)$, we have
	$$P \le \mathrm{poly}(n) \sum_{\substack{1\leq \tau^1\leq N_1 \\ 1\leq \tau^2\leq N_2}} \frac{1}{N_1N_2} F(P, \pi_{\tau^1}\otimes\pi_{\tau^2})^2 \pi_{\tau^1}\otimes\pi_{\tau^2},$$
	where $\mathrm{poly}(n)$ is a polynomial in $n$ of degree $d_1(2d_1+1)+d_2(2d_2+1)-2$, $N_r=d_r\times\binom{n+d_r^2-1}{n}$, $r=1,2$, and the Markovian probability distributions $\pi_{\tau^r} \in \mathcal P(V_r^n)$ have been defined in Corollary \ref{theo:doubly_Markov_exch}.
\end{coro}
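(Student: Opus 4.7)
The plan is to derive this corollary as a direct application of Lemma \ref{lem_constrainedDeFinettiTheorem}, feeding it the per-extreme-point bound already provided by Corollary \ref{theo:doubly_Markov_exch}, and to keep track of the polynomial pre-factor via the counting in Corollary \ref{cor:Markov-exchangeable}. Conceptually this is the concrete specialization of the Cartesian product Proposition stated just before Corollary \ref{theo:doubly_exch}, applied to Markov exchangeability on each of the two factors $V_1$ and $V_2$.

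First I would verify that the doubly Markov exchangeable equivalence relation $\sim$ on $V^n = (V_1\times V_2)^n$ is precisely the Cartesian product of the two Markov-exchangeability relations $\sim_1$ on $V_1^n$ and $\sim_2$ on $V_2^n$. Indeed, the number of transitions $(i,j)\to(i',j')$ in a sequence $((a_k,b_k))_{k=1}^n$ is just the joint count of transitions $i\to i'$ in $a$ and $j\to j'$ in $b$, and this coincides, as an equivalence relation, with the conjunction of the single-coordinate transition counts being equal. Consequently every equivalence class factorizes as $\Ccal_\tau = \Ccal_{\tau^1}\times \Ccal_{\tau^2}$ with $\tau=(\tau^1,\tau^2)$, the uniform distribution factorizes as $Q_\tau = Q_{\tau^1}\otimes Q_{\tau^2}$, and the total number of classes is $N = N_1 N_2$ with $N_r\leq d_r\binom{n+d_r^2-1}{n}$.

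Next I would apply Lemma \ref{lem:ME} independently on each coordinate, obtaining $Q_{\tau^r} \leq \alpha_r(n)\,\pi_{\tau^r}$, with $\alpha_r(n)$ the polynomial bound from \eqref{eq:bound-alpha-ME}. Tensorizing the two inequalities and using the class factorization produces exactly the content of Corollary \ref{theo:doubly_Markov_exch}, namely $Q_\tau \leq \alpha_1(n)\alpha_2(n)\,\pi_{\tau^1}\otimes\pi_{\tau^2}$. Plugging this into Lemma \ref{lem_constrainedDeFinettiTheorem}, with the candidate approximants $\pi_\tau := \pi_{\tau^1}\otimes\pi_{\tau^2}$ and $\alpha(n) = \alpha_1(n)\alpha_2(n)$, delivers the announced inequality with overall pre-factor $N\alpha(n)^2 = N_1 N_2 \,\alpha_1(n)^2 \alpha_2(n)^2$.

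The final step is a routine degree count. Each $N_r$ is a polynomial in $n$ of degree $d_r^2-1$, and by \eqref{eq:bound-alpha-ME} the quantity $\alpha_r(n)^2$ has degree $d_r(d_r+1)$. Summing, the $r$-th factor contributes $d_r^2-1 + d_r(d_r+1) = d_r(2d_r+1)-1$, so the total degree is $d_1(2d_1+1) + d_2(2d_2+1) - 2$, exactly as claimed. There is no genuine obstacle in this argument: the two points deserving a careful sentence each are the identification of $\sim$ as a Cartesian product relation (so that $Q_\tau$ factorizes as a tensor product) and the degree bookkeeping above; everything else is an immediate reduction to the Markov-exchangeable case already handled in Subsection \ref{sect:Markov}.
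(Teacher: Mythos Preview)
Your overall approach is correct and coincides with the paper's intended argument: Corollary \ref{cor:doubly-Markov-exchangeable} is an immediate consequence of Lemma \ref{lem_constrainedDeFinettiTheorem} fed with the per-extreme-point tensor bound of Corollary \ref{theo:doubly_Markov_exch}, together with the degree bookkeeping you carry out (which is accurate).

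There is, however, one incorrect sentence in your ``verification'' step. You write that ``the number of transitions $(i,j)\to(i',j')$ \ldots\ coincides, as an equivalence relation, with the conjunction of the single-coordinate transition counts being equal.'' This is false: equality of all joint transition counts on $V=V_1\times V_2$ is the \emph{global} Markov-exchangeability relation on $V$, which is strictly finer than the Cartesian product $\sim_1\times\sim_2$. Two sequences can share all $V_1$-transition counts and all $V_2$-transition counts while having different joint $(i,j)\to(i',j')$ counts. The paper explicitly warns about this distinction just before Corollary \ref{theo:doubly_exch}. Fortunately, the claim is also unnecessary: in the paper, double Markov exchangeability is \emph{defined} via types $\tau=(\tau^1,\tau^2)$, i.e.\ as the Cartesian product relation, so the factorizations $\mathcal C_\tau=\mathcal C_{\tau^1}\times\mathcal C_{\tau^2}$, $Q_\tau=Q_{\tau^1}\otimes Q_{\tau^2}$ and $N=N_1N_2$ hold by definition. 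Delete or correct that sentence and your proof stands.
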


\section{Universal de Finetti reductions for exchangeable conditional probability distributions} \label{sect:conditional}

Our aim now is to show that exchangeability also obeys a universal conditional de Finetti reduction, in the sense of Definition \ref{def:deF-reduction-cond}. As before, we first state a general result, which we then apply to our case of focus.  

\subsection{General statement}

We start with proving an equivalent of Lemma \ref{lem_constrainedDeFinettiTheorem} for partially exchangeable conditional probability distributions, which is as the latter valid for any notion of partial exchangeability. Before going into the precise result, it might be worth insisting on one point. Lemma \ref{lem_conditionedDeFinetti} below is a statement about a conditional probability distribution $P_{A^n|X^n}$ obtained from a $\sim$--exchangeable probability distribution $P_{A^nX^n}$ on $A^n\times X^n$. It might happen though that the object at hand is only a $\sim$--exchangeable conditional probability distribution $P_{A^n|X^n}$ (what we mean by this is that $P(a|x)=P(b|y)$ whenever $(a,x)\sim(b,y)$). Nevertheless, it is easy in this case to construct a $\sim$--exchangeable probability distribution $P'_{A^nX^n}$ which is such that $P'_{A^n|X^n}=P_{A^n|X^n}$: one simply has to set $P'_{A^nX^n}=Q_{X^n}P_{A^n|X^n}$ with $Q_{X^n}$ satisfying $Q(x)=Q(y)$ whenever there exist $a,b$ such that $(a,x)\sim(b,y)$. Lemma \ref{lem_conditionedDeFinetti} thus applies to $P'_{A^nX^n}$, and therefore also directly to $P_{A^n|X^n}$, without any knowledge of a $\sim$--exchangeable probability distribution of origin. 

\begin{lem}\label{lem_conditionedDeFinetti}
	Suppose that there exist functionals $\alpha_1(n),...,\alpha_N(n)$, $\alpha'_1(n),...,\alpha'_N(n)$ and probability distributions $\pi_k\in \Pcal_\sim(A^n\times X^n)$, $k=1,...,N$ such that
	\begin{equation}\label{eq1_lem_conditionedDeFinetti}
	Q_{k,A^nX^n}\le\alpha_k(n)\,\pi_{k,A^nX^n}\ \ \text{and}\ \ \pi_{k,X^n}\le\alpha'_k(n)\,Q_{k,X^n}\ \text{on}\ \mathrm{supp}(Q_{k,X^n}) \qquad \text{ for all } k=1,...,N.
	\end{equation}
	Then, for any probability distribution $P\in\Pcal_\sim(A^n\times X^n)$, one has
	\begin{equation}\label{eq2_lem_conditionedDeFinetti}
	P_{A^n|X^n}\le N\times\alpha(n)\times\alpha'(n)\sum_{k=1}^N\, \frac{1}{N}\,\pi_{k,A^n|X^n}
	\end{equation}
	where $\alpha(n)=\max\,\{\alpha_k(n),\ k=1,...,N\}$, $\alpha'(n)=\max\,\{\alpha'_k(n),\ k=1,...,N\}$.
	
	In particular, if $\alpha(n),\alpha'(n)$ are polynomial in $n$, then the pair $(\sim, \{\pi_{1,A^n|X^n},\ldots,\pi_{N,A^n|X^n}\})$, admits a universal de Finetti reduction in the sense of Definition \ref{def:deF-reduction-cond}, with $\mathrm{poly}(n)=N\times\alpha(n)\times\alpha'(n)$.
\end{lem}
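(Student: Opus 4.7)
The plan is to proceed exactly as in the proof of Lemma \ref{lem_constrainedDeFinettiTheorem}: start from the extreme point decomposition of $P$ guaranteed by Theorem \ref{theo_dFextremepoints}, namely $P = \sum_{k=1}^N \mu_k(P) Q_k$ with $\mu_k(P) = F(P,Q_k)^2$, and then push this through the conditioning operation. Concretely, taking $X^n$-marginals gives $P_{X^n} = \sum_k \mu_k(P) Q_{k,X^n}$, and by definition
\[
P_{A^n|X^n}(a|x) \;=\; \frac{\sum_{k=1}^N \mu_k(P)\, Q_k(a,x)}{\sum_{k=1}^N \mu_k(P)\, Q_{k,X^n}(x)}
\]
for every $x$ with $P_{X^n}(x) > 0$ (the inequality is trivial otherwise, or we extend $P_{A^n|X^n}(\cdot|x)$ arbitrarily).

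Next, I would apply the two hypotheses in \eqref{eq1_lem_conditionedDeFinetti} to the numerator. The first hypothesis $Q_{k,A^nX^n} \le \alpha_k(n)\,\pi_{k,A^nX^n}$ gives $Q_k(a,x) \le \alpha(n)\,\pi_{k,A^n|X^n}(a|x)\,\pi_{k,X^n}(x)$. Here a small support technicality appears: the second hypothesis $\pi_{k,X^n} \le \alpha'_k(n)\,Q_{k,X^n}$ is only known on $\mathrm{supp}(Q_{k,X^n})$. This is harmless, however, because whenever $Q_{k,X^n}(x) = 0$ we also have $Q_k(a,x) = 0$ for all $a$, so that $k$ contributes nothing to the numerator; on the remaining indices we may safely substitute $\pi_{k,X^n}(x) \le \alpha'(n)\,Q_{k,X^n}(x)$. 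After these two substitutions the numerator is bounded by
\[
\alpha(n)\,\alpha'(n)\,\sum_{k\,:\, Q_{k,X^n}(x)>0} \mu_k(P)\, Q_{k,X^n}(x)\, \pi_{k,A^n|X^n}(a|x).
\]

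Dividing by the denominator $\sum_k \mu_k(P)\, Q_{k,X^n}(x)$, I recognize the right-hand side as $\alpha(n)\,\alpha'(n)$ times a convex combination of the $\pi_{k,A^n|X^n}(a|x)$, the weights being $\mu_k(P)\,Q_{k,X^n}(x)/\sum_{k'} \mu_{k'}(P)\,Q_{k',X^n}(x)$. Since each of these $N$ weights lies in $[0,1]$, I can crudely bound the convex combination above by $\sum_{k=1}^N \pi_{k,A^n|X^n}(a|x) = N \sum_{k=1}^N \frac{1}{N}\pi_{k,A^n|X^n}(a|x)$, which yields \eqref{eq2_lem_conditionedDeFinetti}.

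I expect no serious obstacle here: the combinatorics is fully handled by Theorem \ref{theo_dFextremepoints} and the hypotheses, and the only delicate point is the support issue for $Q_{k,X^n}$, which is resolved by observing that indices with $Q_{k,X^n}(x)=0$ contribute trivially to both numerator and denominator and can therefore be discarded before invoking the second bound in \eqref{eq1_lem_conditionedDeFinetti}. The polynomiality assertion at the end is then automatic: if $N$, $\alpha(n)$, $\alpha'(n)$ are all polynomial in $n$, so is $N\,\alpha(n)\,\alpha'(n)$, and Definition \ref{def:deF-reduction-cond} is satisfied with the uniform measure $\nu$ on $\{\pi_{1,A^n|X^n},\ldots,\pi_{N,A^n|X^n}\}$.
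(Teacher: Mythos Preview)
Your proof is correct and follows essentially the same route as the paper: both start from the simplex decomposition $P=\sum_k \mu_k Q_k$, use the two hypotheses in \eqref{eq1_lem_conditionedDeFinetti} to pass from $Q_k$ to $\pi_{k,A^n|X^n}$, and then drop the convex weights to pick up the factor $N$. The only cosmetic difference is that the paper first isolates the intermediate inequality $P_{A^n|X^n}\le\sum_k Q_{k,A^n|X^n}$ (by using $\mu_k Q_{k,X^n}\le P_{X^n}$ term-by-term in the denominator) and then bounds each $Q_{k,A^n|X^n}$, whereas you bound the numerator first and recognise a convex combination afterwards; the two computations are rearrangements of one another.
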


\begin{proof}
	Since $P_{A^nX^n}\in\Pcal_\sim(A^n\times X^n)$, we have as in the proof of Lemma \ref{lem_constrainedDeFinettiTheorem}, the equality
	\[ P_{A^nX^n} =\sum_{k=1}^N{\mu_k(P_{A^nX^n})Q_{k,A^nX^n}},\]
	where $\mu_k(P_{A^nX^n})=F(P_{A^nX^n},Q_{k,A^nX^n})^2$. Hence, for each $k=1,\ldots,N$, we obviously have
	\[\mu_k(P_{A^nX^n})Q_{k,A^nX^n}\le P_{A^nX^n},\]
	so that, by taking the marginal over $A^n$,
	\[\mu_k(P_{A^nX^n})Q_{k,X^n}\le P_{X^n}.\]
	And we therefore get 
	\[P_{A^n|X^n}\le \sum_{k=1}^N\, Q_{k,A^n|X^n}.\]
	Now we exploit inequality \eqref{eq1_lem_conditionedDeFinetti} to argue that, for each $k=1,...,N$, 
	\[Q_{k,A^n|X^n}\le \alpha_k(n)\times\alpha'_k(n)\,\pi_{k,A^n|X^n},\]
	which concludes the proof. 
\end{proof}

\begin{remark}
	As already mentioned in the introduction, the main difference between the results of Lemmas \ref{lem_constrainedDeFinettiTheorem} and \ref{lem_conditionedDeFinetti} is that the right hand side of equation \eqref{eq2_lem_constrainedDeFinettiTheorem} depends on $P$ (via the fidelity weight) while the right hand side of equation \eqref{eq2_lem_conditionedDeFinetti} is independent of $P_{A^n|X^n}$. This is an issue which was already pointed out in \cite{L-W}, in the particular case of exchangeability: in de Finetti reductions for conditional probability distributions, any information that one may have on the $P_{A^n|X^n}$ of interest (other than its partial exchangeability) is not reflected in the upper bounding conditional probability distribution.
\end{remark}

\subsection{Specific result}

Before being able to state the counterpart of Corollary \ref{cor:exchangeable} for exchangeable conditional probability distributions, we need one last technical lemma.  

\begin{lem} \label{lem:exch-marginal}
	The marginals on $X^n$ of exchangeable probability distributions on $A^n\times X^n$ are exchangeable probability distributions on $X^n$. What is more, the marginals on $X^n$ of the extreme exchangeable probability distributions on $A^n\times X^n$ are exactly the extreme exchangeable probability distribution on $X^n$, with some redundancies.
\end{lem}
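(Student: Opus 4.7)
\medskip
\noindent\emph{Proof plan.} The first assertion is almost definitional: exchangeability on $A^n\times X^n$ amounts to invariance of $P$ under the diagonal action of the symmetric group $S_n$, which simultaneously permutes the $n$ positions in $A^n$ and $X^n$, since two sequences in $(A\times X)^n$ have the same joint type if and only if one is a permutation of the other. Because the marginalization over $A^n$ commutes with permutations of the $X^n$-coordinates, $P_{X^n}$ inherits $S_n$-invariance, hence is itself exchangeable.

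For the second assertion, I will invoke Theorem \ref{theo_dFextremepoints}: the extreme exchangeable distributions on $A^n\times X^n$ are the uniform distributions $Q_\tau$ on the classes $\Ccal_\tau$ indexed by joint types $\tau=(\tau_{i,j})_{(i,j)\in A\times X}$ with $\sum_{i,j}\tau_{i,j}=n$, while the extreme exchangeable distributions on $X^n$ are the uniforms $Q_s$ indexed by types $s=(s_j)_{j\in X}$ with $\sum_j s_j=n$. The strategy is then to compute $Q_{\tau,X^n}$ by direct enumeration. Fix $x\in X^n$ of type $s'$. I will show that the set $\{a\in A^n:(a,x)\in\Ccal_\tau\}$ is non-empty if and only if $s'_j=\sum_i\tau_{i,j}=:s_j$ for every $j\in X$, and that in that case its cardinality equals $\prod_{j\in X}\binom{s_j}{(\tau_{i,j})_{i\in A}}$, obtained by independently distributing the $a$-symbols within each preimage $\{k:x_k=j\}$. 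Dividing by $|\Ccal_\tau|=n!/\prod_{i,j}\tau_{i,j}!$, which is the analogue of \eqref{eq:bound-C-t} applied to the alphabet $A\times X$, the factors $\prod_{i,j}\tau_{i,j}!$ cancel and leave
\[ Q_{\tau,X^n}(x)=\frac{\prod_{j\in X}s_j!}{n!}\,\Ind_{s'=s}, \]
which is precisely $Q_s(x)$.

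It only remains to check that every extreme $Q_s$ on $X^n$ is realized this way: given any type $s$, fix an arbitrary $i_0\in A$ and set $\tau_{i,j}=s_j\,\Ind_{i=i_0}$; this defines a joint type on $A\times X$ whose marginal on $X$ is $s$, and by the previous computation $Q_{\tau,X^n}=Q_s$. The redundancy is then automatic, since as soon as $|A|\ge 2$ many joint types $\tau$ share the same column-sum vector $s$. The argument is pure multinomial bookkeeping, so I foresee no genuine obstacle; the only point requiring care is being consistent about which level, the joint type on $A\times X$ versus the marginal type on $X$, each multinomial coefficient refers to.
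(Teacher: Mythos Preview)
Your proof is correct, and it takes a genuinely different route from the paper's. The paper argues qualitatively: it isolates the equivalence ``$x$ and $y$ have the same $X$-type $\iff$ there exists a bijection $\phi$ on $A^n$ such that $(a,x)\sim(\phi(a),y)$ for all $a$'', and uses this to show (i) that $P_{X^n}$ is constant on $X^n$-classes, and (ii) that the support of each $Q_{\tau,X^n}$ is contained in a single $X^n$-class. This never identifies \emph{which} extreme point $Q_{\tau,X^n}$ is, nor does it spell out the surjectivity direction of ``exactly''. Your argument instead computes $Q_{\tau,X^n}$ outright via multinomial bookkeeping, obtaining $Q_{\tau,X^n}=Q_s$ with $s_j=\sum_i\tau_{i,j}$, and then exhibits a preimage for every $s$. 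This is more informative (it names the map $\tau\mapsto s$ explicitly and explains the redundancy as the fibres of column-summation) and closes the surjectivity gap that the paper leaves implicit. The paper's bijection argument, on the other hand, is slightly more conceptual and makes clearer \emph{why} the analogous statement can fail for other notions of partial exchangeability (cf.\ the remark following the lemma): the equivalence it relies on is specific to full $S_n$-invariance.
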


\begin{proof}
Let us start with one crucial observation. Given $x,y\in X^n$, the following statements are equivalent: 
\begin{enumerate}
\item $x$ and $y$ are exchangeable.
\item For any $a\in A^n$, there exists $b\in B^n$ such that $(a\,x)$ and $(b\,y)$ are exchangeable. 
\item There exists $\phi:A^n\rightarrow A^n$ bijective such that, for any $a\in A^n$, $(a\,x)$ and $(\phi(a)\,y)$ are exchangeable. 
\end{enumerate}
With this in mind, the two assertions in Lemma \ref{lem:exch-marginal} are now easy to prove. 

Let $P_{A^nX^n}$ be an exchangeable probability distribution on $A^n\times X^n$ and $P_{X^n}$ be its marginal on $X^n$. Assume that $x,y\in X^n$ are exchangeable. Then,
\[ P_{X^n}(x) = \sum_{a\in A^n} P_{A^nX^n}(a\,x) = \sum_{a\in A^n} P_{A^nX^n}(\phi(a)\,y) = P_{X^n}(y), \]
where the first and last equalities are by definition of the marginal $P_{X^n}$ while the middle one is by exchangeability of $P_{A^nX^n}$. This proves that $P_{X^n}$ is indeed an exchangeable probability distribution on $X^n$.

Let $Q_{A^nX^n}$ be an extreme exchangeable probability distribution on $A^n\times X^n$ and $Q_{X^n}$ be its marginal on $X^n$. Let $x,y\in X^n$ and assume that $Q_{X^n}(x)\neq 0$. Suppose first that $x$ and $y$ are exchangeable. Since we know by what precedes that $Q_{X^n}$ is exchangeable, we then have $Q_{X^n}(y)=Q_{X^n}(x)$, and thus $Q_{X^n}(y)\neq 0$. Suppose now that $x$ and $y$ are not exchangeable. Since $Q_{X^n}(x)=\sum_{a\in A^n} Q_{A^nX^n}(a\,x)$, we know that there exists $a\in A^n$ such that $Q_{A^nX^n}(a\,x)\neq 0$. So if we had $Q_{X^n}(y)\neq 0$, then by the same reasoning there would exist $b\in A^n$ such that $Q_{A^nX^n}(b\,y)\neq 0$. Because $Q_{A^nX^n}$ is an extreme exchangeable probability distribution, this would imply that $(a\,x)$ and $(b\,y)$ are exchangeable, and therefore that $x$ and $y$ are exchangeable, which is a contradiction. Hence necessarily, $Q_{X^n}(y)= 0$. This proves that $Q_{X^n}$ is indeed an extreme exchangeable probability distribution on $X^n$.
\end{proof}

\begin{remark} 
	Note that Lemma \ref{lem:exch-marginal} is really specific to exchangeability, and is not necessarily true for any notion of partial exchangeability. For instance, it is false for Markov exchangeability. A counter-example can be provided already in the simplest case where $A=X=\{1,2\}$. Consider the sequence $(ax)=((11),(12),(21),(21))\in A^4\times X^4$. It is the only element in its equivalence class for Markov exchangeability on $A^4\times X^4$, characterized by $(11)$ as first point and transitions $(11)\rightarrow(12)$, $(12)\rightarrow(21)$, $(21)\rightarrow(21)$ appearing once each. The uniform distribution $Q_{A^4X^4}$ on this class is thus the Dirac mass on $(ax)$, whose marginal $Q_{X^4}$ is the Dirac mass on $x=(1,2,1,1)$. Now, the equivalence class of $x$ for Markov exchangeability on $X^4$, which is characterized by $1$ as first point and the transitions $1\rightarrow 2$, $2\rightarrow 1$, $1\rightarrow 1$ appearing once each, also contains $y=(1,1,2,1)$. So the uniform distribution on this class is not $Q_{X^4}$ (and in fact $Q_{X^4}$ is not even Markov exchangeable).
\end{remark}

In the case of exchangeability, the values of $N$, the number of classes, and of $\alpha(n)$, the maximal ratio $Q_{k,A^nX^n}/\pi_{k,A^nX^n}$, appearing in Lemma \ref{lem_conditionedDeFinetti}, have already been shown to be polynomial in $n$ (in Subsection \ref{sect:exch}). So the only thing that remains to be done is to show that also $\alpha'(n)$, the maximal ratio $\pi_{k,X^n}/Q_{k,X^n}$ on the support of $Q_{k,X^n}$, is polynomial in $n$. Now, by Lemma \ref{lem:exch-marginal}, we know that the marginals $\pi_{k,X^n}$ are exchangeable distributions and the marginals $Q_{k,X^n}$ are extreme exchangeable distributions. So the value of the maximal ratio $\pi_{k,X^n}/Q_{k,X^n}$ on the support of $Q_{k,X^n}$ is also provided by the proof of Lemma \ref{lem_exchangeability}, and is actually $1$.
With these observations at hand, we can then immediately derive from Lemma \ref{lem_conditionedDeFinetti} Corollary \ref{cor:exchangeable-cond} below, in which we denote by $d$ the size of the alphabet $A\times X$.

\begin{coro} \label{cor:exchangeable-cond}
	For any exchangeable probability distribution $P \in \mathcal P(A^n\times X^n)$, we have
	\[ P_{A^n|X^n} \le \mathrm{poly}(n) \sum_{t=1}^N \frac 1 N \,\pi^{\otimes n}_{t,A^n|X^n}, \]
	where $\mathrm{poly}(n)$ is a polynomial in $n$ of degree $3(d-1)/2$, $N=\binom{n+d-1}{n}$ and the probability distributions $\pi_t \in \mathcal P(A\times X)$ have been defined in Lemma \ref{lem_exchangeability}.
\end{coro}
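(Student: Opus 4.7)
The plan is to apply Lemma \ref{lem_conditionedDeFinetti} to the equivalence relation of exchangeability on $A^n\times X^n$, viewed as a power of the alphabet $V=A\times X$ of size $d$. For each type $t$, I take as distinguished distribution $\pi_{k,A^nX^n}=\pi_t^{\otimes n}$, where $\pi_t\in\Pcal(A\times X)$ is the empirical distribution of type $t$, as defined in Lemma \ref{lem_exchangeability}. To apply Lemma \ref{lem_conditionedDeFinetti}, I need, for each type $t$, to control two ratios: the full-distribution ratio $Q_t/\pi_t^{\otimes n}$, which provides the functional $\alpha_k(n)$, and the ratio of marginals $\pi_{t,X^n}/Q_{t,X^n}$ on the support of $Q_{t,X^n}$, which provides $\alpha'_k(n)$.

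For the first ratio, Lemma \ref{lem_exchangeability} applied to the alphabet $V=A\times X$ of size $d$ directly gives $Q_t\le\alpha(n)\,\pi_t^{\otimes n}$ with $\alpha(n)$ polynomial in $n$ of degree $(d-1)/2$, so one can set $\alpha_k(n)=\alpha(n)$. The key observation for the second ratio is provided by Lemma \ref{lem:exch-marginal}: the marginal $Q_{t,X^n}$ is itself an \emph{extremal} exchangeable distribution on $X^n$, hence the uniform distribution on some equivalence class $\Ccal_{\tilde t_X}$, where $\tilde t_X$ is the empirical distribution on $X$ induced by $t$. Meanwhile, the marginal $\pi_{t,X^n}$ of $\pi_t^{\otimes n}$ is i.i.d.~with single-letter distribution $\tilde\pi_{t,X}(j)=\tilde t_X(j)/n$.

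A short calculation for $x\in\Ccal_{\tilde t_X}$ then gives
\[ \frac{\pi_{t,X^n}(x)}{Q_{t,X^n}(x)}=\binom{n}{\tilde t_X}\prod_{j\in X}\tilde\pi_{t,X}(j)^{\tilde t_X(j)}, \]
which is precisely the probability that a multinomial random variable of parameters $(n,\tilde\pi_{t,X})$ takes the value $\tilde t_X$, hence is at most $1$. Consequently one may take $\alpha'_k(n)=1$.

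Plugging these bounds into Lemma \ref{lem_conditionedDeFinetti} yields the announced reduction, with a prefactor $N\times\alpha(n)\times\alpha'(n)$ polynomial in $n$ of degree $(d-1)+(d-1)/2+0=3(d-1)/2$. The proof is essentially an assembly of ingredients; the one subtlety worth flagging is the appeal to Lemma \ref{lem:exch-marginal}, which is exactly what keeps $\alpha'_k(n)$ at the constant $1$. Without it, one would be forced to bound $\pi/Q$ using a reverse direction of Lemma \ref{lem_exchangeability}, which would strictly worsen the degree of the polynomial prefactor.
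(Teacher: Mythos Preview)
Your proof is correct and follows essentially the same route as the paper: apply Lemma~\ref{lem_conditionedDeFinetti} with $\pi_k=\pi_t^{\otimes n}$, use Lemma~\ref{lem_exchangeability} for $\alpha(n)$, and invoke Lemma~\ref{lem:exch-marginal} to identify $Q_{t,X^n}$ as an extremal exchangeable distribution on $X^n$, so that the ratio $\pi_{t,X^n}/Q_{t,X^n}$ is a multinomial probability and hence $\alpha'(n)=1$. Your explicit identification of this ratio as a multinomial point mass is a nice clarification of what the paper states more tersely.
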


\begin{remark}
	Note that the degree of the polynomial pre-factor in Corollary \ref{cor:exchangeable} above (roughly $d$) is the same as the one obtained in \cite[Corollary 3]{A-FR15}.
\end{remark}

Note that in the cases of Markov exchangeability and $\ell$-Markov exchangeability, it has already been shown as well that the values of $N$, the number of classes, and of $\alpha(n)$, the maximal ratio $Q_{k,A^nX^n}/\pi_{k,A^nX^n}$, appearing in Lemma \ref{lem_conditionedDeFinetti}, are polynomial in $n$ (in Subsections \ref{sect:Markov} and \ref{sect:kMarkov}). In these cases though, since Lemma \ref{lem:exch-marginal} does not hold, the proofs of Lemmas \ref{lem:ME} and \ref{lem:kME} do not tell us that $\alpha'(n)$, the maximal ratio $\pi_{k,X^n}/Q_{k,X^n}$ on the support of $Q_{k,X^n}$, is $1$. It could however be that it is indeed also polynomial in $n$, but we were not able to prove this by another approach.

\section{Application: winning probability in repeated two-player non-local games}\label{sect:NLgames}

Let us describe the general setting that we have in mind for applying the results above, which is the one of two-player non-local games. Such a game $G$ is played by two separated but cooperating players. Each of them receives an input $x\in X$ or $y\in Y$, and has to produce an output $a\in A$ or $b\in B$. The players are declared to have won the game if a given binary predicate $V(xyab)$ on their two inputs and two outputs is satisfied. To achieve this goal they are allowed to agree on a common strategy before the game starts, depending on the probability distribution of the inputs $T_{XY}$ and on the binary predicate $V$, which are both known by them. But once the game has started they cannot communicate any more. The strategies (i.e.~conditional probability distributions) $P_{AB|XY}$ that they are able to implement are therefore restricted by this locality constraint. What is precisely considered their allowed set of strategies depends on how much physical power they are assumed to have: they could for instance be sharing classical randomness, or quantum entanglement, or non-signalling boxes, or some other type of correlations. The quantity that we are then interested in their maximum winning probability, i.e.
\[ \omega(G)= \max\left\{ \sum_{x\in X,y\in Y}T(xy)\sum_{a\in A,b\in B} V(xyab)P(ab|xy),\ P\ \text{allowed} \right\}. \]

In the sequel, we will actually look at what happens when such two-player non-local game $G$ is repeated $n$ times, either in parallel or in sequence. The question we will ask is the following: if the players cannot win $1$ instance of the game $G$ with probability $1$, does there probability of winning $n$ instances of $G$ (played either simultaneously or one after another) decay exponentially to $0$ as $n$ grows? We will describe a potential strategy to tackle this problem, which is exactly the one pushed through in \cite{L-W} in the case of the parallel repetition of $G$ for non-signalling players.

\subsection{Parallel repetition of two-player non-local games}\label{sect:parallel}

The parallel repetition of $G$, that we denote $G^n_{par}$, consists in each player receiving simultaneously $n$ inputs $(x_1,\ldots,x_n)\in X^n$ or $(y_1,\ldots,y_n)\in Y^n$, and having to produce simultaneously $n$ outputs $(a_1,\ldots,a_n)\in A^n$ or $(b_1,\ldots,b_n)\in B^n$. The players are then declared to have won if they have won all $n$ instances of $G$, i.e.~if the $n$ binary predicates $V(x_1y_1a_1b_1),\ldots,V(x_ny_na_nb_n)$ are satisfied. Concerning the probability distribution of the inputs, we simply assume that the $n$ pairs of inputs are sampled independently according to the probability $T_{XY}$. The maximum winning probability when playing $G^n_{par}$ is thus
\[ \omega(G^n_{par}) = \max \left\{ \sum_{x\in X^n,y\in Y^n}T^{\otimes n}(xy) \sum_{a\in A^n,b\in B^n} V^{\otimes n}(xyab)P^{n}(ab|xy),\ P^{n}\ \text{allowed} \right\}. \]
What we would like now is to compare $\omega(G^n_{par})$ to $(\omega(G))^n$. Indeed, if the optimal strategy $P^n_*$ for $G^n_{par}$ were of the form $P^{\otimes n}_*$ with $P_*$ an optimal strategy for $G$, then we would clearly have $\omega(G^n_{par})=(\omega(G))^n$. But it could be that some allowed strategies $P^n$ which are not of the form $P^{\otimes n}$ with $P$ an allowed strategy give to the players a higher winning probability in $G^n_{par}$.

Obviously, the function $(x,y,a,b)\in X^n\times Y^n\times A^n\times B^n\mapsto T^{\otimes n}(xy)V^{\otimes n}(xyab)$ is exchangeable in the $n$ copies of $X\times Y\times A\times B$. So we can assume without loss of generality that the optimal strategy $P^{n}_*$ for $G^n_{par}$ is also exchangeable. And hence by Corollary \ref{cor:exchangeable}, we can upper bound $\omega(G^n_{par})$ as
\[ \omega(G^n_{par}) \leq \mathrm{poly}(n) \sum_{t=1}^N \frac{1}{N} F(T^{\otimes n}P^{n}_*,\pi_t^{\otimes n})^2 {\langle V,\pi_t \rangle}^n. \]

Note that in the expression above, the probabilities $\pi_t$'s are a priori not of the form $TP$ with $P$ an allowed strategy, so that it might be that $\langle V,\pi_t \rangle > \omega(G)$. However, we can split the $\pi_t$'s into two groups: those that are $\delta$-close to being of the form $TP$ with $P$ an allowed strategy, for which we have $\langle V,\pi_t \rangle^n\leq(\omega(G)+\delta)^n$, and the other ones, for which we hope to be able to show that $F(T^{\otimes n}P^{n}_*,\pi_t^{\otimes n})^2\leq (1-f(\delta))^n$.

\subsection{Sequential repetition of two-player non-local games}\label{sect:sequential}

The sequential repetition of $G$, that we denote $G^n_{seq}$, consists in each player receiving one after another $n$ inputs $x_1,\ldots,x_n\in X$ or $y_1,\ldots,y_n\in Y$, and having to produce one after another $n$ outputs $a_1,\ldots,a_n\in A$ or $b_1,\ldots,b_n\in B$. As in the case of the parallel repetition of $G$, the players are then declared to have won if they have won all $n$ instances of $G$. But concerning the probability distribution of the inputs, we this time assume the following: the first pair of inputs is sampled according to the probability $T_{XY}$, while subsequent pairs of inputs are sampled from the one preceding them according to a transition probability $\tau_{XYXY}$ such that, for each $x\in X,y\in Y$, $\sum_{x'\in X,y'\in Y}\tau(x'y',xy)=T(xy)$. The maximum winning probability when playing $G^n_{seq}$ is thus
\[ \omega(G^n_{seq}) = \max \left\{ \sum_{x\in X^n,y\in Y^n} T\otimes\tau^{\otimes (n-1)}(xy)\sum_{a\in A^n,b\in B^n} V^{\otimes n}(xyab)P^{n}(ab|xy),\ P^{n}\ \text{allowed} \right\}. \]
What we would like now is to compare $\omega(G^n_{seq})$ to $(\omega(G))^n$. Indeed, if the optimal strategy $P^n_*$ for $G^n_{seq}$ were of the form $P^{\otimes n}_*$ with $P_*$ an optimal strategy for $G$, then we would clearly have $\omega(G^n_{seq})\leq(\omega(G))^n$. But, as for $G^n_{par}$, it could be that some allowed strategies $P^n$ which are not of the form $P^{\otimes n}$ with $P$ an allowed strategy give to the players a higher winning probability in $G^n_{seq}$.

Obviously, the function $(x,y,a,b)\in X^n\times Y^n\times A^n\times B^n\mapsto T\otimes\tau^{\otimes (n-1)}(xy) V^{\otimes n}(xyab)$ is Markov exchangeable in the $n$ copies of $X\times Y\times A\times B$. So we can assume without loss of generality that the optimal strategy $P^{n}_*$ for $G^n_{seq}$ is also Markov exchangeable. And hence by Corollary \ref{cor:Markov-exchangeable}, we can upper bound $\omega(G^n_{seq})$ as
\[ \omega(G^n_{seq}) \leq \mathrm{poly}(n) \sum_{k=1}^N \frac{1}{N} F(T\otimes\tau^{\otimes (n-1)}P^{n}_*,\pi_k)^2 {\langle V^{\otimes n},\pi_k \rangle}. \]

We will once again argue similarly to what we did in the parallel repetition case. In the expression above, the Markov chains $\pi_k$'s are a priori not of the form $T\otimes\tau^{\otimes (n-1)}P^{\otimes n}$ with $P$ an allowed strategy, so that it might be that ${\langle V^{\otimes n},\pi_k \rangle} > (\omega(G))^n$. However, we can split the Markov chains $\pi_k$'s into two groups: those whose transition probability is $\delta$-close to being of the form $\tau P$ with $P$ an allowed strategy, for which we have ${\langle V^{\otimes n},\pi_k \rangle}\leq(\omega(G)+\delta)^n$, and the other ones, for which we hope to be able to show that $F(T\otimes\tau^{\otimes (n-1)}P^{n}_*,\pi_k)^2\leq (1-f(\delta))^n$.

\bigskip

\noindent \textit{Acknowledgments.} I.B.~C.L.~and I.N.~are supported by the French CNRS (ANR project StoQ ANR-14-CE25-0003). C.L.~is supported by the European Research Council (grant 64891) and the Spanish MINECO (grant MTM2014-54240-P). I.N.~acknowledges support from  the von Humboldt foundation, the PHC Sakura program (project 38615VA) and the French CNRS (InFIniTi project MISTEQ). The authors would like to thank the Institut Henri Poincar\'e and its staff for its hospitality during the trimester T3-2017 ``Analysis in Quantum Information Theory'', where this project was finalized. 

\bibliographystyle{alpha}
\bibliography{biblio}

\begin{thebibliography}{vAEdB51}

\bibitem[AFR15]{A-FR15}
Rotem Arnon-Friedman and Renato Renner.
\newblock {de Finetti reductions for correlations}.
\newblock {\em Journal of Mathematical Physics}, 56(5):052203, 2015.

\bibitem[Chi10]{chiribella2010quantum}
Giulio Chiribella.
\newblock {On quantum estimation, quantum cloning and finite quantum de Finetti
  theorems}.
\newblock In {\em {Conference on Quantum Computation, Communication, and
  Cryptography}}, pages 9--25. Springer, 2010.

\bibitem[dF69]{dF}
Bruno de~Finetti.
\newblock {Sulla proseguibilit\`{a} di processi aleatori scambiabili}.
\newblock {\em Rend. Ist. Mat. Trieste}, 1:53--67, 1969.

\bibitem[DF80a]{DiaFreed80}
Persi Diaconis and David Freedman.
\newblock {de Finetti's theorem for Markov chains}.
\newblock {\em The Annals of Probability}, pages 115--130, 1980.

\bibitem[DF80b]{diaconis1980finite}
Persi Diaconis and David Freedman.
\newblock {Finite exchangeable sequences}.
\newblock {\em The Annals of Probability}, pages 745--764, 1980.

\bibitem[Har13]{harrow2013church}
Aram~W Harrow.
\newblock {The church of the symmetric subspace}.
\newblock {\em arXiv preprint arXiv:1308.6595}, 2013.

\bibitem[Loe11]{loehr2011bijective}
Nicholas Loehr.
\newblock {\em Bijective combinatorics}.
\newblock CRC Press, 2011.

\bibitem[LW07]{liu2007log}
Lily~L Liu and Yi~Wang.
\newblock On the log-convexity of combinatorial sequences.
\newblock {\em Advances in Applied Mathematics}, 39(4):453--476, 2007.

\bibitem[LW16]{L-W}
C{\'e}cilia Lancien and Andreas Winter.
\newblock {Parallel repetition and concentration for (sub-)no-signalling game
  via a flexible constrained de {F}inetti reduction}.
\newblock {\em Chicago J. Theor. Comput. Science}, 2016(11), 2016.

\bibitem[LW17]{lancien2016flexible}
C{\'e}cilia Lancien and Andreas Winter.
\newblock {Flexible constrained de {F}inetti reductions and applications}.
\newblock {\em J. Math. Phys.}, 58(092203), 2017.

\bibitem[Rob55]{Robbins}
Herbert Robbins.
\newblock A remark on {S}tirling's formula.
\newblock {\em The American Mathematical Monthly}, 62(1):26--29, 1955.

\bibitem[Sta99]{stanley1999enumerative}
Richard~P. Stanley.
\newblock {\em {Enumerative Combinatorics vol 2}}.
\newblock {Cambridge Studies in Advanced Mathematics}. Cambridge University
  Press, 1999.

\bibitem[Tak79]{Tak}
Masamichi Takesaki.
\newblock {\em {Theory of operator algebras {I}}}.
\newblock Springer-Verlag, New York-Heidelberg, 1979.

\bibitem[TS41]{tutte1941unicursal}
WT~Tutte and CAB Smith.
\newblock {On unicursal paths in a network of degree 4}.
\newblock {\em The American Mathematical Monthly}, 48(4):233--237, 1941.

\bibitem[vAEdB51]{van1951circuits}
T~van Aardenne-Ehrenfest and NG~de~Bruijn.
\newblock {Circuits and trees in oriented linear graphs}.
\newblock {\em Simon Stevin}, 28:203--217, 1951.

\bibitem[Zam84]{Zaman1984urn}
Arif Zaman.
\newblock {Urn models for Markov exchangeability}.
\newblock {\em The Annals of Probability}, 12(1):223--229, 1984.

\bibitem[Zam86]{Zaman1986finite}
Arif Zaman.
\newblock {A finite form of de Finetti's theorem for stationary Markov
  exchangeability}.
\newblock {\em The Annals of Probability}, pages 1418--1427, 1986.

\end{thebibliography}
\end{document}